\documentclass[11pt,psamsfonts]{amsart}

\usepackage[english]{babel}
\usepackage{amsmath,amssymb,graphicx,enumerate,bbm,caption}

\newcommand{\tmop}[1]{\ensuremath{\operatorname{#1}}}

\newenvironment{enumeratenumeric}{\begin{enumerate}[1.] }{\end{enumerate}}

\newcommand{\cR}{\ensuremath{\mathbbm{R}}}
\newcommand{\eps}{{\varepsilon}}

\usepackage{amsmath}
\usepackage{amsthm}
\usepackage{amssymb}
\usepackage{amscd}
\usepackage{amsfonts}
\usepackage{amsbsy}
\usepackage{epsfig,afterpage}
\usepackage{psfrag}
\usepackage{overpic}
\usepackage{latexsym}
\usepackage{pstcol}
\usepackage{graphicx}
\usepackage{mathtools}

\newcommand{\R}{\ensuremath{\mathbb{R}}}

\newcommand{\N}{\ensuremath{\mathbb{N}}}

\newcommand{\Z}{\ensuremath{\mathbb{Z}}}
\newcommand{\cS}{\mathbbm{S}}

\newcommand{\e}{\varepsilon}

\newcommand{\vF}{\mathcal{F}}
\newcommand{\vG}{\mathcal{G}}
\newcommand{\vZ}{\mathbf{Z}}
\newcommand{\vW}{\mathbf{W}}

\newcommand{\mini}{\mathrm{min}}

\newcommand{\NH}{\mathbf{NH}}
\newcommand{\Slide}{\mathrm{Slide}}
\newcommand{\Sew}{\mathrm{Sew}}
\newcommand{\reg}{\mathbf{ r}}
\newcommand{\sgn}{\operatorname{sgn}}
\newtheorem {theorem} {Theorem} [section]

\newtheorem {corollary}  {Corollary}[section]

\newtheorem {remark} {Remark}
\newenvironment{Remark} {\begin{remark} \rm }{\end{remark}}

\newtheorem {example} {Example}
\newenvironment{Example} {\begin{example}  \rm }{\end{example}}

\begin{document}

\title  [Regularization of Discontinuous Foliations]{Regularization of 
Discontinuous Foliations: Blowing up and Sliding Conditions via Fenichel Theory}

\author[   D. Panazzolo, P.R. da Silva]
{Daniel Panazzolo  $^{1,2}$ and Paulo R. da Silva $^3$ }

\address{$^1$  
Laboratoire de Math\'{e}matiques, Informatique et Applications--UHA,
4 Rue des Fr\`{e}res Lumi\`{e}re - 68093 Mulhouse, France}
\address{$^2$  
Universit\' {e} de Strasbourg, France}

\address{$^3$  Departamento de Matem\'{a}tica --
IBILCE--UNESP, Rua C. Colombo, 2265, CEP 15054--000 S. J. Rio Preto,
S\~ao Paulo, Brazil}

\email{daniel.panazzolo@uha.fr}
\email{prs@ibilce.unesp.br}

\date{}
\maketitle

\begin{abstract}
We study the regularization  of an oriented 1-foliation $\vF$ on $M \setminus \Sigma$ where $M$ is a  smooth manifold 
and $\Sigma \subset M$ is a closed subset, which can be interpreted as the discontinuity locus of $\vF$.
In the spirit of Filippov's work, we define a sliding and sewing dynamics on the discontinuity locus $\Sigma$ 
as some sort of limit of the dynamics of a nearby smooth 1-foliation and
obtain conditions  to identify whether a point belongs to the sliding or sewing regions.
\end{abstract}

\section{Introduction}\label{s0}

A 1-dimensional (singular) oriented foliation $\vF$ on a smooth manifold $M$ is defined by exhibiting an open covering 
of $M$ and a collection of smooth vector fields whose domains are the open sets of this covering, and which 
agree on the intersections of these open sets up to multiplication by a strictly positive function.  
A \emph{discontinuous 1-foliation} on  $M$ is given by  a closed subset 
$\Sigma \subset M$ with empty interior and a 1-dimensional oriented foliation 
on $M \setminus \Sigma$. \\

To fix the ideas  we start with the usual setting which was initially 
studied by Filippov  \cite{AF}. The  foliations considered 
are determined by flows of vector fields  expressed by
\begin{equation}\label{fili}
X = \Big(\frac{1+\sgn(f)}{2}\Big) X_+ + \Big(\frac{1-\sgn(f)}{2}\Big) X_-
\end{equation} 
for some smooth vector fields $X_+, X_-$ defined on $M$ and a function $f \in C^\infty(M)$ having $0$ as a regular value.  
The discontinuity locus is the smooth codimension one submanifold $\Sigma = f^{-1}(0)$.\\

In this setting, we say that a point $p\in\Sigma$ is \emph{$\Sigma$-regular} if  
$\mathcal{L}_{X_-}(f)\mathcal{L}_{X_+}(f)\neq 0$ 
and it is \emph{$\Sigma$-singular} if $\mathcal{L}_{X_-}(f)\mathcal{L}_{X_+}(f)= 0$.
Moreover the regular points are classified as \textit {sewing} if $\mathcal{L}_{X_-}(f)\mathcal{L}_{X_+}(f)>0$ 
or \textit{sliding} if $\mathcal{L}_{X_-}(f)\mathcal{L}_{X_+}(f)<0$.  \\

According Filippov's convention,  the flow of $X$ is easily determined in the neighborhood of  sewing points. 
Roughly speaking, it behaves like a constant vector field, as in\textit{ Flow Box Theorem}. However when 
a trajectory finds a sliding point,  the orbit remains in $\Sigma$ up to a $\Sigma$-singular point.
In the sliding region of $\Sigma$ the trajectory follows the flow determined by  a convex combination of $X^+$ and $X^-$, 
called  \textit{sliding vector field}.\\

\begin{figure}[!htb]
\epsfysize=4cm \centerline{\epsfbox{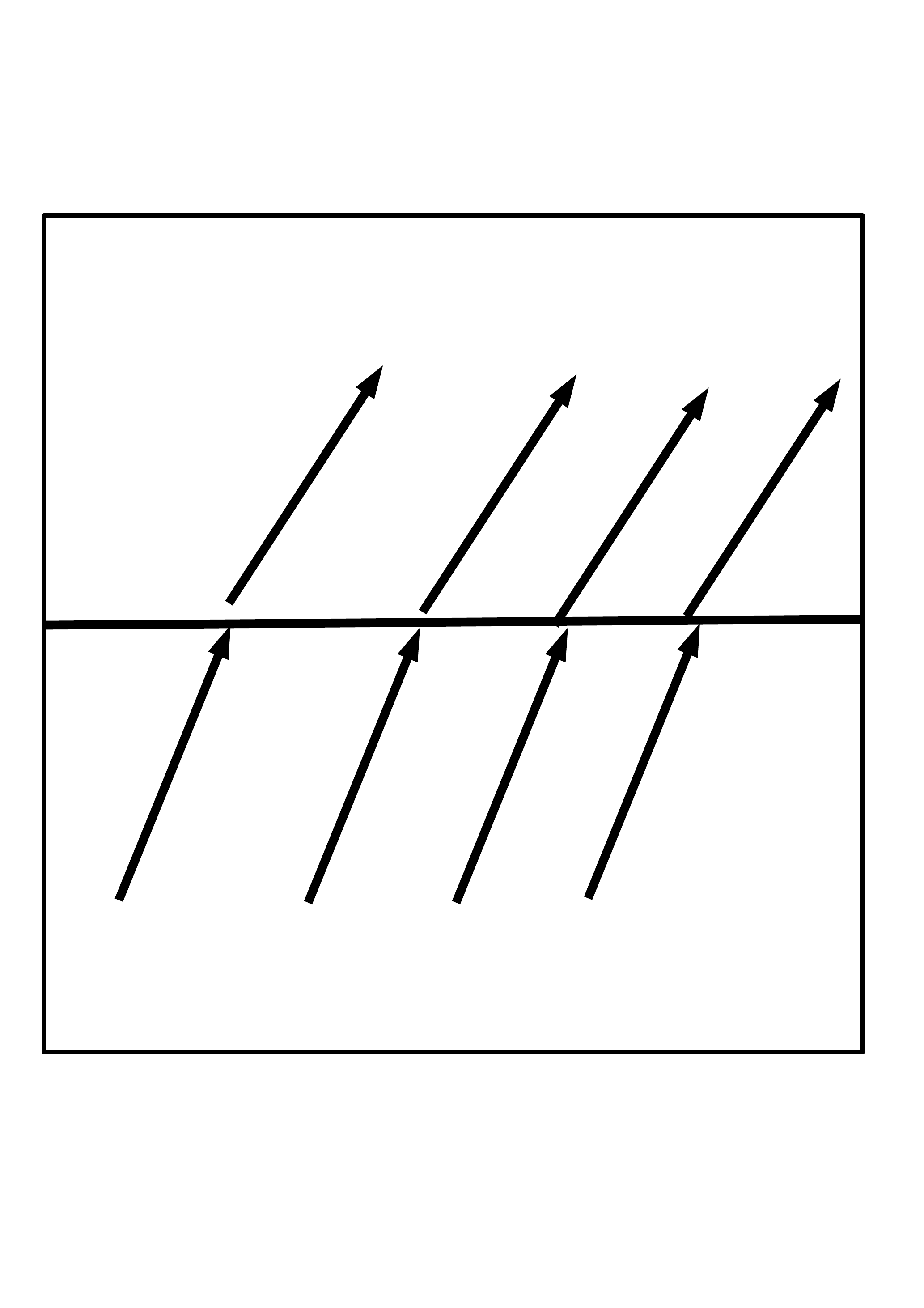}\quad\epsfbox{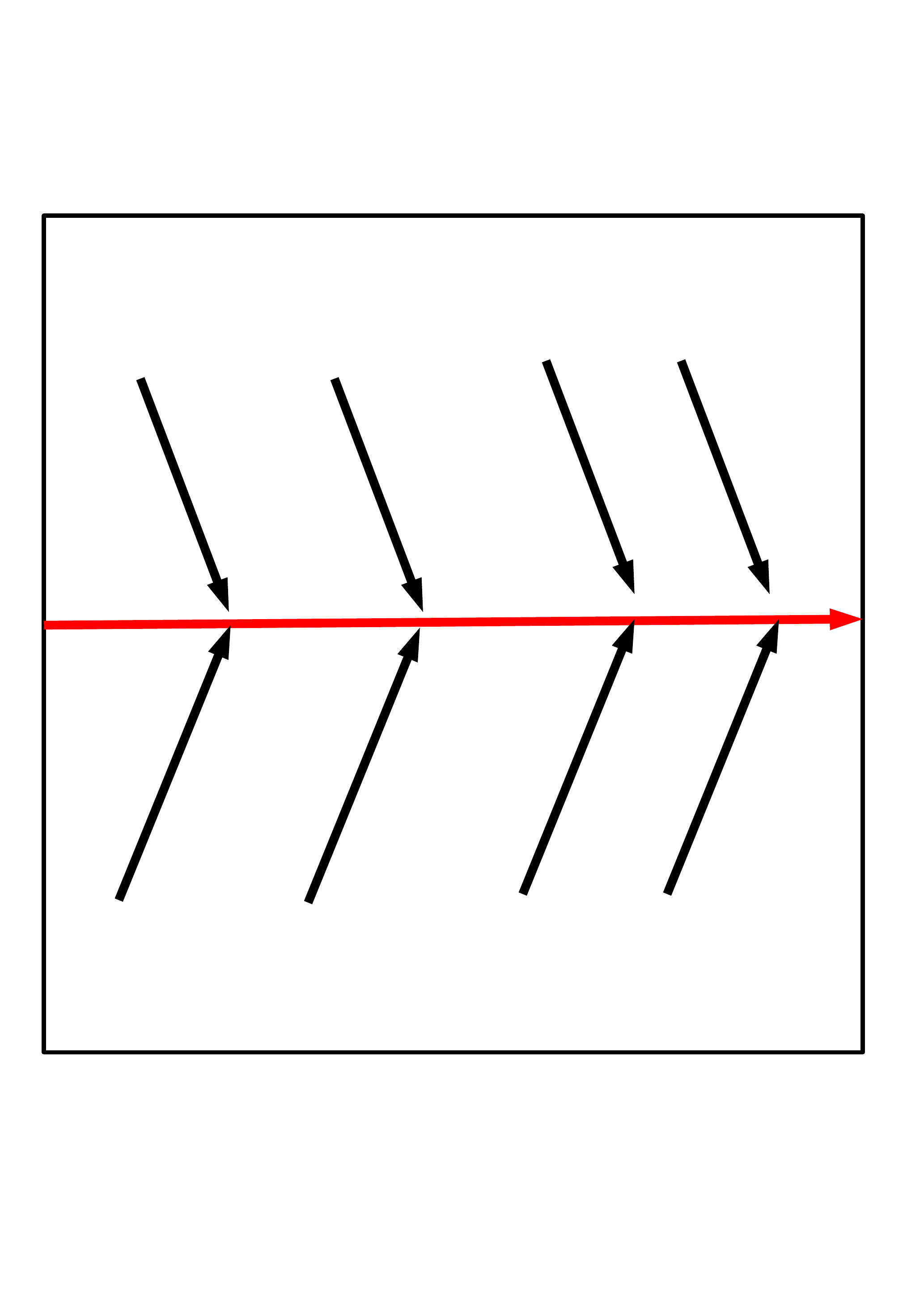}}
\caption{Sewing and sliding regions.
 }\label{rsew-rslid}
\end{figure}

These concepts do not have a natural generalization when the discontinuity occurs in singular sets, 
that is when $\Sigma = f^{-1}(0)$ is the inverse image of a critical  value. 
This is one of subjects which will be discussed in this article. \\

Our main tool in the study of  discontinuous foliation  is the \emph{regularization}. 
Basically, a regularization is a family of smooth vector fields $ X_{\e} $ depending on a parameter $\e > 0$ 
and such that $ X_{\e} $ converges uniformly to  $ X $ in each compact subset of $M \setminus \Sigma $ as $\e$ goes to zero.
One of the most well-known regularization process was introduced by Sotomayor and Teixeira \cite{LT,ST}.  
It is based on the use of   \emph{monotonic transition functions} $\varphi:\R\rightarrow\R$
(\footnote{by definition, this is a $C^\infty$ function such that $\varphi(t) = -1$ 
for $t \le -1$, $\varphi(t) = 1$ for $t \ge 1$ and $\varphi'(t) >0$ for $-1 < t < 1$.}).
The \emph{ST-regularization} of the vector field $X$ given in (\ref{fili}) is
the one parameter family 
\begin{equation} \label{STreg} X_{\e}= \frac{1}{2} \left(  1 + \varphi \left( \dfrac{f}{\e}\right)    \right)X_+ 
+ \frac{1}{2}\left ( 1 - \varphi \left( \dfrac{f}{\e}\right)   \right)   X_-. 
\end{equation}

The regularized vector field $X_{\e}$ is smooth for $\e > 0$ and 
satisfies that $X_{\e}=  X_+$ on $\{f>\e\}$ and $X_{\e}=  X_-$ on $\{f<-\e\}$. 
With this regularization process Sotomayor and Teixeira developed a systematic study of the 
singularities of these systems and 
also developed the Peixoto's program about structural stability. In particular, 
Teixeira analyzed the singularity of the kind
fold-fold, which was later known as $T$-singularity. We refer also \cite{Kr1,Kr2} for related problems.\\

In \cite{BPT}, the use of singular perturbation and blow-up techniques were introduced in the study 
of the ST-regularization.
Let us briefly describe this procedure, assuming for simplicity that $M = \R^2$ and that $\Sigma = \{y=0\}$. 
If we write $X_+=a_+\frac{\partial}{\partial x} + b_+ \frac{\partial}{\partial y}$ and 
$X_-=a_-\frac{\partial}{\partial x} + b_- \frac{\partial}{\partial y}$  then  
\[X_{\e}=\frac{1}{2}\left(a_++a_- +\varphi\left(  \dfrac{y}{\e}\right)\big( a_+-a_-\big)\right)\dfrac{\partial}{\partial x}+
\frac{1}{2} \left(b_++b_- +\varphi\left(\dfrac{y}{\e} \right)\big( b_+-b_-\big) \right)\dfrac{\partial}{\partial y}.\]
Considering the directional blow-up $y=\bar{\e}\bar{y}$, $\e=\bar{\e}$ we get the vector field
$$
\bar{X_{\e}}=\frac{1}{2}\left(a_++a_- +\varphi\left( \bar{y}\right)\big( a_+-a_-\big)\right)\dfrac{\partial}{\partial x}+
\frac{1}{2\bar{\e}} \left(b_++b_- +\varphi\left(\bar{y} \right)\big( b_+-b_-\big) \right)\dfrac{\partial}{\partial \bar{y}} 
$$
which corresponds to the singular perturbation problem  (\footnote{System \eqref{sp-problem} is called slow system and it is equivalent, up to a time reparametrization, to the fast system
 \[ 
 x' = \frac{\e}{2}\left(a_++a_- +\varphi\left( \bar{y}\right)\big( a_+-a_-\big)\right) \quad
 \bar{ y} =\frac{1}{2} \left(b_++b_- +\varphi\left(\bar{y} \right)\big( b_+-b_-\big) \right).
\]}).

\begin{equation} 
\label{sp-problem}
\left\{ \begin{array}{rcl}
 \dot x &=& \frac{1}{2}\left(a_++a_- +\varphi\left( \bar{y}\right)\big( a_+-a_-\big)\right) \vspace{0.5cm}\\
\bar{\e}\,\dot {\bar y} &=& \frac{1}{2} \left(b_++b_- +\varphi\left(\bar{y} \right)\big( b_+-b_-\big) \right)
\end{array}
\right.
\end{equation}

For  $\bar{\e}=0$, the slow manifold of \eqref{sp-problem} is the set implicitly defined by 
\[(b_++b_-)+\varphi\left(\bar{y}\right)(b_++b_-)=0\]
with slow flow determined by 
\[\frac{1}{2}\left(a_++a_- +\varphi\left( \bar{y}\right)\big( a_+-a_-\big) \right)\dfrac{\partial}{\partial x}.\]  
Silva et all  \cite{LST2} proved 
that the set of sliding points, according  Filippov convention, is the projection
of the slow manifold of \eqref{sp-problem} on $\Sigma$.  Moreover 
they proved that the slow flow of \eqref{sp-problem} and the sliding vector field idealized by  Filippov have the same equation.\\

For better visualization, we use the polar blow up $ y= r \cos \theta, \e = r \sin  \theta $ 
with $ \theta \in (0, \pi) $. 
In this case the discontinuity $ \Sigma $ is replaced by a semi-cylinder on which we 
draw the slow manifold and the fast and slow 
trajectories. See figure \ref{buNew}.\\

The singular perturbation problem which is obtained evidently depends on the choice of the regularization. 
The sliding vector field idealized  by 
Filippov appears when we consider the ST-regularization, see for instance \cite{LST,LST2,LST3,LST4}. 
However Novaes and his collaborators \cite{NJ} have 
considered   a slightly more general regularization, called   non-linear regularization, 
which produces singular perturbation problem with slow manifold
having fold points and thus not  defining only one possible sliding flow.
It seems evident that other regularizations may produce new sliding regions.\\

The techniques of singular perturbation have also been applied to deal with 
discontinuities on surfaces with singularities. 
Teixeira and his collaborators realized that in the case where $\Sigma$ has 
a transverse self-intersection 
a process of double regularization can be used, and that it generates systems with multiple time scales.\\

In this work we intend to unify the different approaches of the previous works. 
Let us briefly summarize the results proved in this paper.\\

Given a  smooth manifold $ M $, initially we  introduce the concepts of 
\emph {1}-dimensional oriented foliation on $ M $ and  \emph {discontinuous 1-foliation} on 
$ M $ with 
discontinuity locus $ \Sigma $. The first question we address is to get conditions so 
that the  foliation can be {\em smoothed }by a sequence of blowing-ups.\\

Our first results are the  following:\\

\begin{itemize}
\item If $\vF$ is a piecewise smooth foliation and the discontinuity locus $\Sigma$ is a smooth 
submanifold of codimension one then the foliation $\vF$ is blow-up smoothable.
See \textit{Theorem \ref{theorem-smoothlocus}}.

\item If the  discontinuity locus $\Sigma$ is a globally defined analytic subset then
there is a piecewise smooth 1-foliation  which is related to the initial foliation by a sequence of blow-ups  and 
whose discontinuity locus is smooth.  Moreover if we further suppose  
that the discontinuity locus  has codimension one then the foliation is blow-up smoothable.  
See \textit{Theorem \ref{theorem-singularlocus} }and \textit{Corolary \ref{corr}}.
\end{itemize}

\begin{figure}[!htb] 
\epsfysize=8cm \centerline{\epsfbox{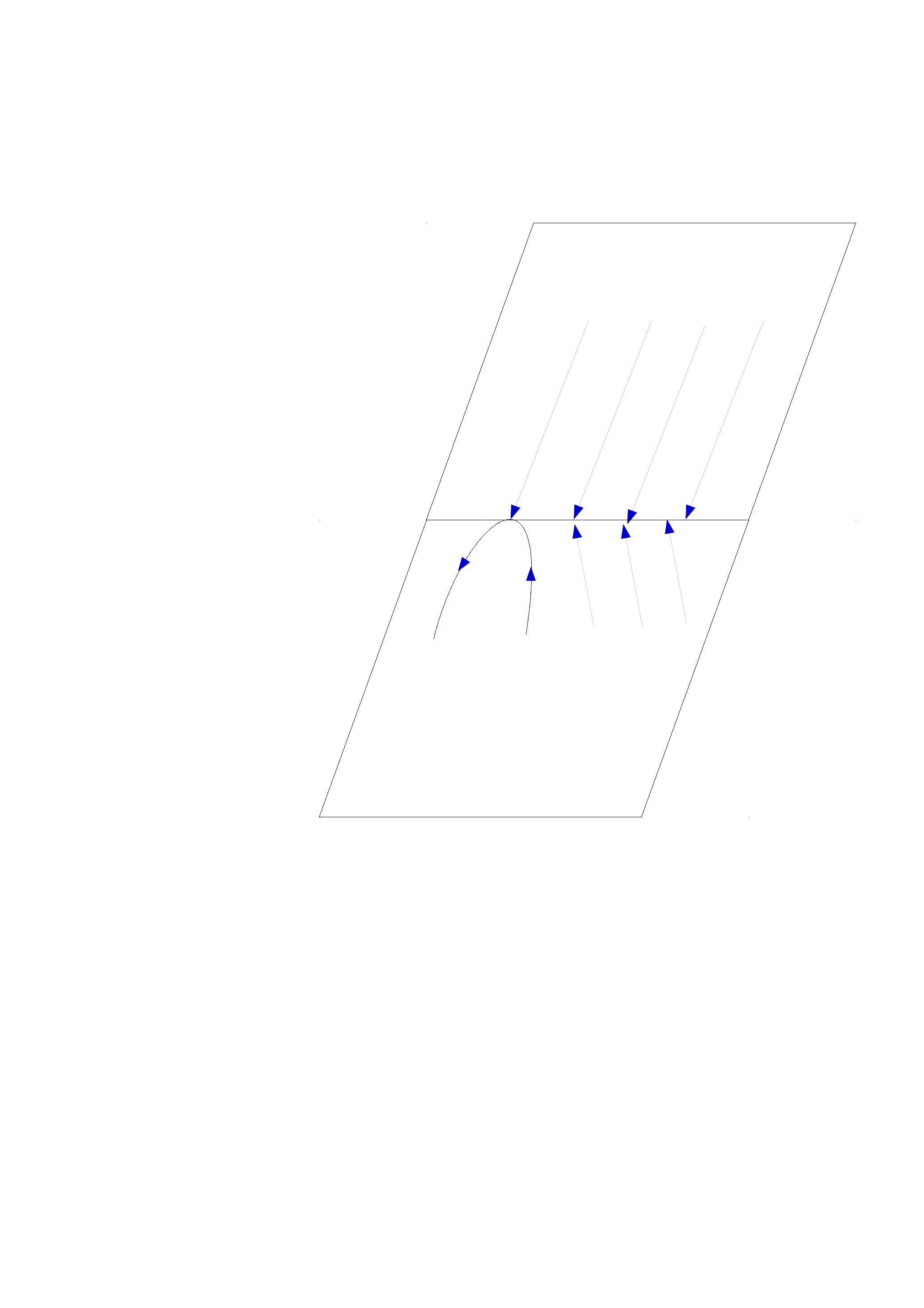} \epsfbox{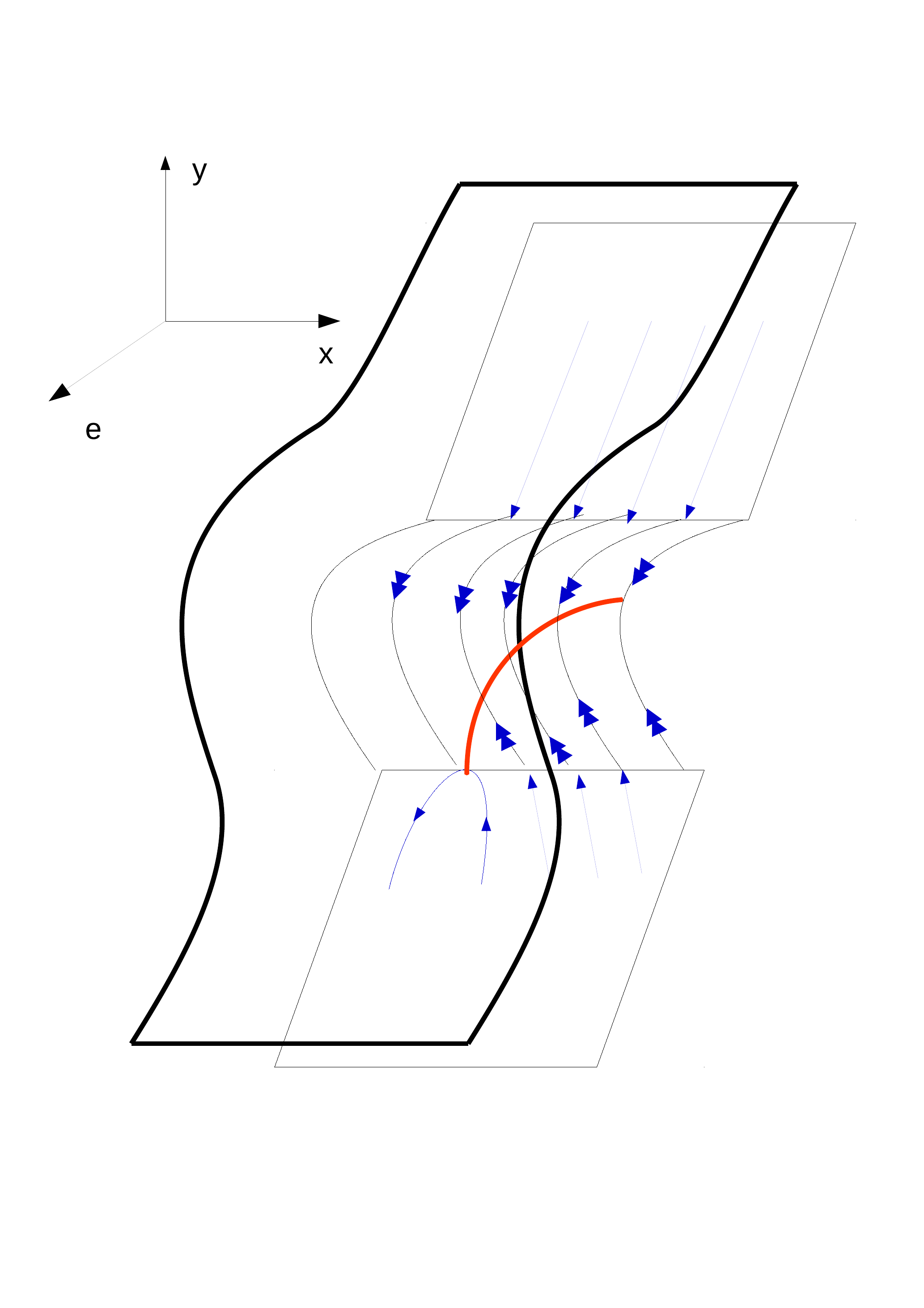}} \caption{ Blow-up smoothing  a regularization of transition type.}
\label{buNew}
\end{figure}

The smoothing procedure defined in Section \ref{s1} does not allow to define the so-called {\em sliding dynamics} 
along the discontinuity locus.  
So, we define such sliding dynamics as the limit of the 
dynamics of a nearby smooth 1-foliations. This leads us to introduce a general notion of regularization for piecewise-smooth 1-foliations. \\

Roughly speaking, the regularization is given by a new foliation 
depending on a parameter $\eps$, which is smooth for $\eps > 0$ and which coincides with 
the original discontinuous 1-foliation when $\eps$ equals zero.  We generalize 
the notion of ST-regularization to this global context and consider a larger family of 
regularizations (called of {\em transition type}) by dropping the condition of  monotonicity of the transition function. 

Basically, the sliding region associated to a given regularization is defined as the accumulation set of invariant manifolds
of the regularized system.  We prove the following results.

\begin{itemize}
\item The regularization of transition type is  blow-up smoothable. See \textit{Theorem \ref{theorem-smoothableST}.}
\item We obtain conditions on the transition function to identify whether a point lies in the sliding region.
See \textit{Theorems \ref{theorem-slide}} and \textit{\ref{theorem-sewingreg}}.\\
\end{itemize}

 The paper is organized as follows. In Section \ref{s1} we give the preliminary definitions  and prove Theorems \ref{theorem-smoothlocus},
  \ref{theorem-singularlocus} and Corolary \ref{corr}.
In Section \ref{s2} we study the regularization and in Section \ref{s3} we combine the blowing-up technique and 
the Fenichel's theory to give sufficient conditions for identifying the sliding region.\\

Figure \ref{figCORNER} is a pictorial representation of the blow-up smoothing process. 
It shows a piecewise smooth discontinuous foliation with analytic discontinuous set 
having a smooth part and a singular one.  Applying a  regularization of the kind transition  we get a new foliation $\mathcal{F}^r$.
In the figure we draw the level $\mathcal{F}^0$ of this foliation. The leaves displayed in $\mathcal{F}^0$ 
are the trajectories of the singular perturbation problem \eqref {sp-problem}. The simple arrows correspond to the slow flow 
and the double arrows correspond to the fast flow, which is obtained after a time reparametrization.

\begin{figure}[!htb]
\epsfysize=8cm \centerline{\epsfbox{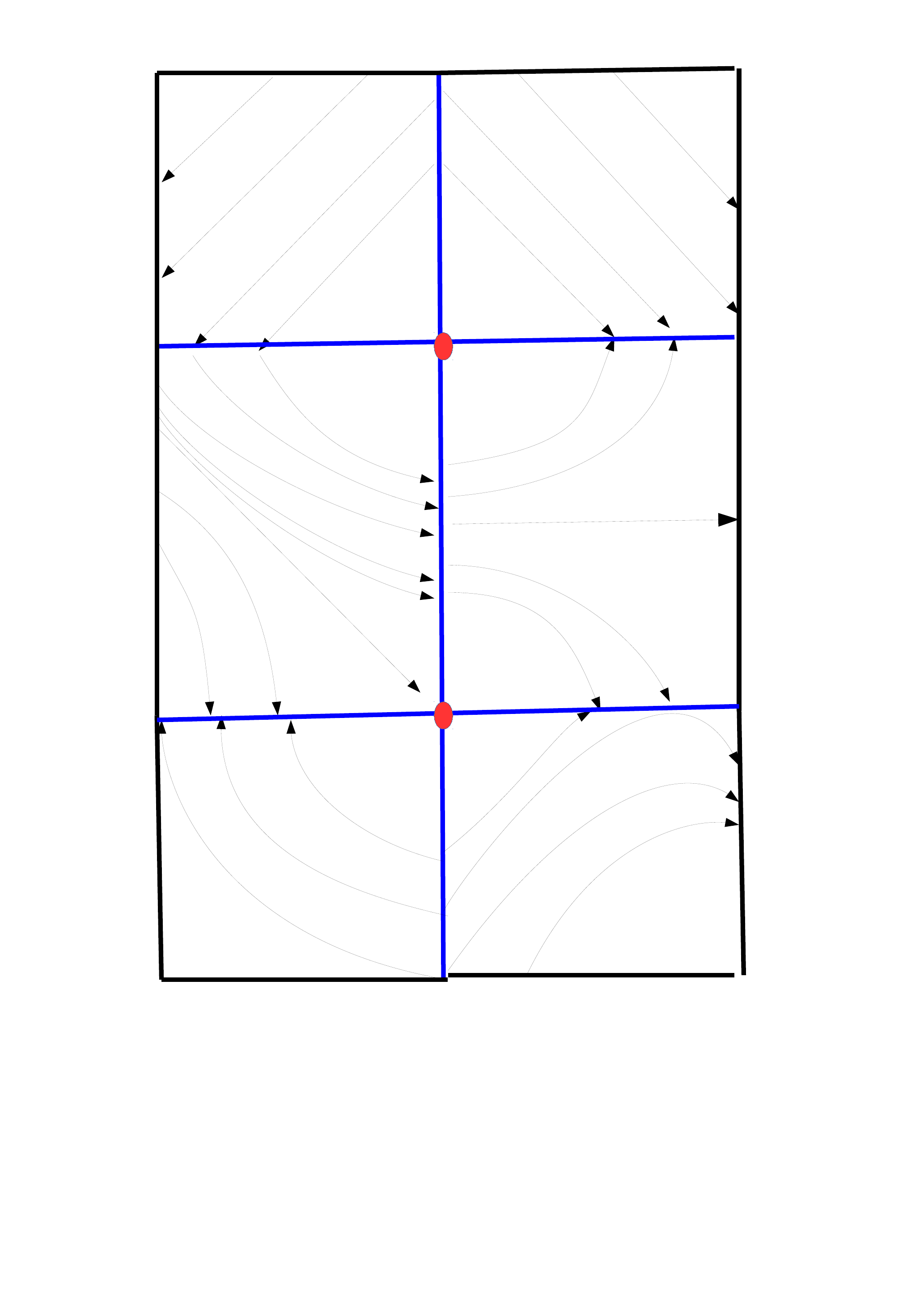}\epsfbox{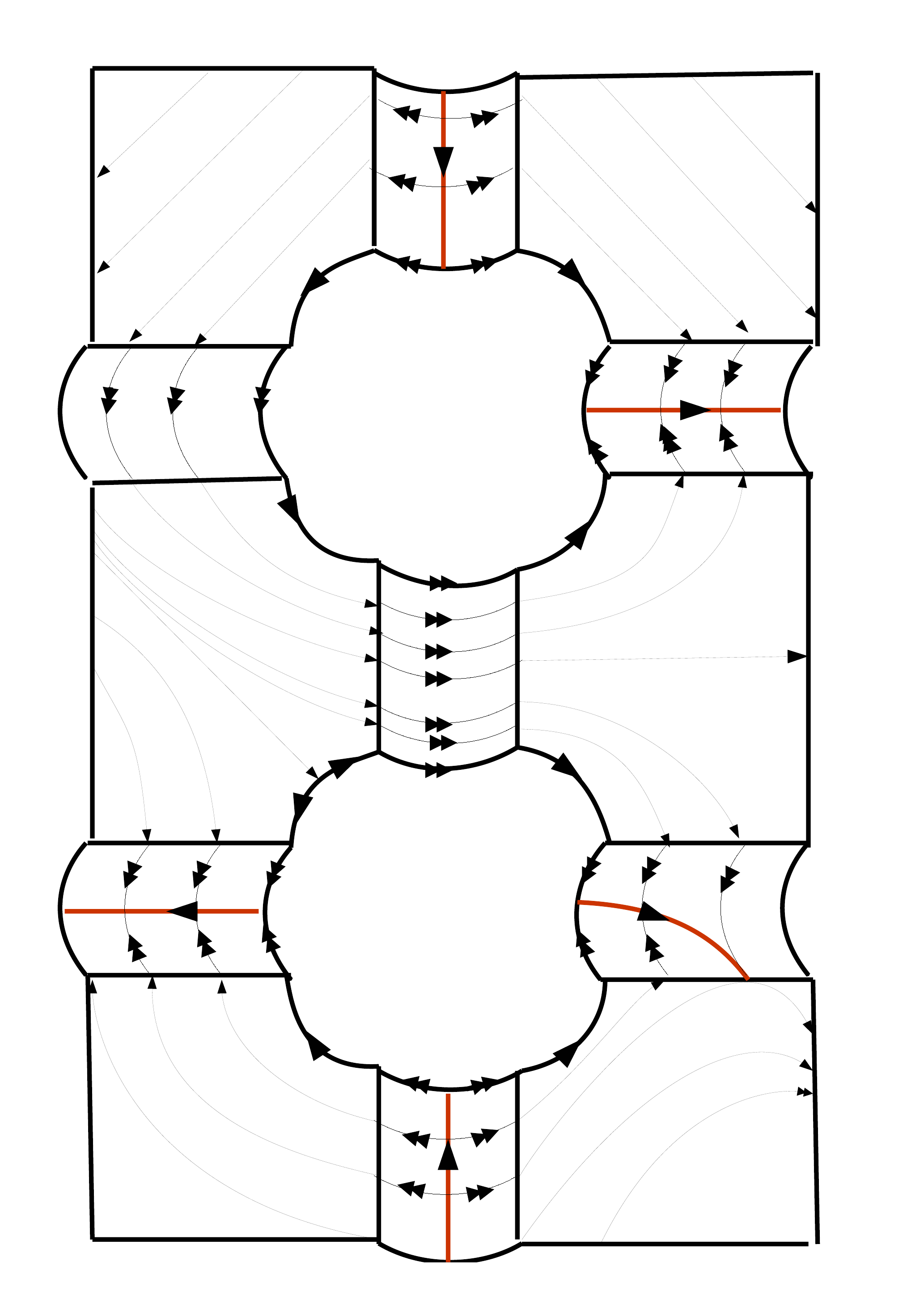}}
\caption{On left we have a discontinuous 1-foliation $\mathcal{F}$, the blue line is $\Sigma^{smooth}$ and the red point
is $\Sigma^{sing}$. On right we see the  leaf   $\mathcal{F}^0$ of the  regularization $\mathcal{F}^r$. The red line
is the slow manifold of the corresponding singular perturbation problem. 
Simple arrow means slow flow  and double arrow the fast one.}
\label{figCORNER}
\end{figure}

\section{Discontinuous 1-foliations}\label{s1}
 In this section we present the preliminary definitions related to discontinuous 1-foliations. Also we enunciate and prove results 
 related to blow-up smoothing of discontinuous 1-foliations.
  
\subsection{Smoothable discontinous foliations}
We work in the category of manifolds with corners.  We briefly recall that a manifold with corners of dimension 
$n$ is a paracompact Hausdorff space with a smooth structure which is locally modeled by open subsets of
$
(\R^+)^k \times \R^{n-k}
$. We refer the reader to \cite{Joy,M} for a careful exposition.

Let $M$ be a smooth manifold (with corners).  A smooth vector field defined on $M$ will 
be called {\em non-flat }if its Taylor expansion is non-vanishing at each point of $M$.  
From now on, all vector fields we will consider are non-flat.

We say that a pair $(V,Y)$ formed by an open set $V \subset M$ and a smooth vector field $Y$ 
defined in $V$ is a \emph{local vector field }in $M$.

A \emph{1-dimensional oriented foliation }on $M$ is a collection
$$
\vF = \{(U_i,X_i)\}_{i \in I}
$$
of local vector fields such that:
\begin{enumeratenumeric}
\item  $\{U_i\}$ is an open covering of $M$. 
\item For each pair $i,j \in I$, 
\begin{equation}\label{transition-function}
X_i = \varphi_{ij} X_j
\end{equation} 
for some strictly positive smooth function $\varphi_{ij}$ defined on $U_i\cap U_j$. 
\end{enumeratenumeric}
\begin{Remark}
The importance to consider {\em  oriented foliations }instead of globally defined vector fields in the manifold $M$ 
will become clear later.  Roughly speaking, even if our initial object is a foliation globally defined by a smooth vector 
field, this property will not necessarily hold after the blowing-up operation.
\end{Remark}
We say that a local vector field $(V,Y)$ is a \emph{local generator }of the foliation $\vF$ if the augmented collection
$$
\{(U_i,X_i)\}_{i \in I} \; \cup \; \{ (V,Y)\}
$$ 
also satisfies conditions 1. and 2. of the above definition.  From now on, we will 
suppose that the collection $\vF$ is {\em saturared}, meaning
that it contains all such local generators.

Let $\psi : N \rightarrow M$ be a smooth diffeomorphism between two manifolds $N$ and $M$.  We will say 
that two 1-dimensional oriented foliations $\vF$ and $\vG$ defined respectively in $M$ and $N$ 
are \emph{Êrelated by $\psi$} if for each local vector field $(V,Y)$ which is a generator 
of $\vG$, the push-forward of this local vector field under $\psi$, namely 
$$
(U,X) := \big( \psi(V), \psi_* Y \big),
$$
is a generator of $\vF$.

A  \emph{possibly discontinuous 1-foliation on a manifold $M$} is given by  a closed subset $\Sigma \subset M$ 
with empty interior and a 1-dimensional oriented foliation $\vF$ defined in $M \setminus \Sigma$.  

The set $\Sigma$ is called the {\em discontinuity locus }of $\vF$. We can write the decomposition 
$$\Sigma = \Sigma^{\tmop{smooth}} \cup \Sigma^{\tmop{sing}}$$
where $\Sigma^{\tmop{smooth}}$ denotes the subset of points where $\Sigma$
locally coincides with an embedded submanifold of $M$. 
We shall say that $\vF$ has a {\em smooth discontinuity
  locus }if $\Sigma = \Sigma^{\tmop{smoth}}$.
  
\begin{Example}\label{example-1}
The vector field in $\R^2$ given by 
\begin{equation}\label{sign-vf}
X = \frac{\partial}{\partial x} - \sgn(y) \frac{\partial}{\partial y}
\end{equation}
defines a discontinuous 1-foliation which has discontinuity locus $\Sigma = \{y = 0\}$.  
\end{Example}

\begin{Example}\label{example-2}
Consider the discontinuous 1-foliation defined in $\R^2$ by the vector field
\begin{equation}\label{radial-vf}
X = \frac{x^2-y^2}{x^2+y^2} \left( -y \frac{\partial}{\partial x} + x \frac{\partial}{\partial y}\right) +
 \left( x \frac{\partial}{\partial x} + y \frac{\partial}{\partial y}\right)
\end{equation}
with discontinuity locus $\Sigma = \{0\}$.  Notice that $X$ is not smooth at the origin.
\end{Example}

\begin{Example}\label{example-3}
Consider the discontinuous 1-foliation defined in $\R^3$ by the vector field
\begin{equation}\label{sign2-vf}
X = - \sgn(x)  \frac{\partial}{\partial x} - \sgn(y) \frac{\partial}{\partial y} + \frac{\partial}{\partial z}
\end{equation}
with has the (non-smooth) discontinuity locus $\Sigma = \{xy=0\}$. 

\begin{figure}[htbp]
\begin{center}
{ \input{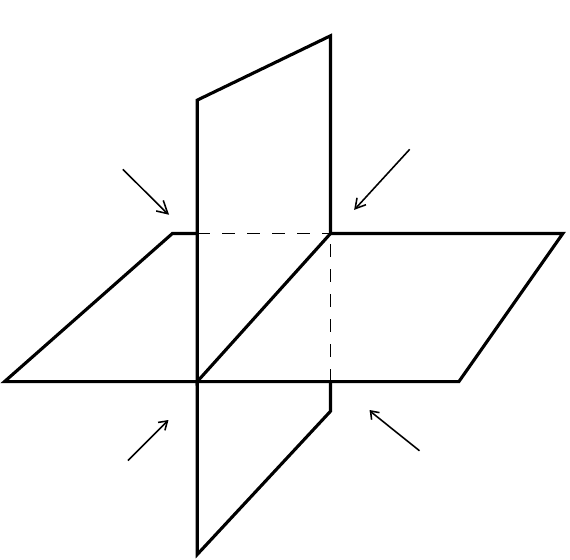_t}}
\caption {Discontinuity locus of the 1-foliation of  the example \ref{example-3}.}
\label{example3-fig}
\end{center}
\end{figure}

\end{Example}
More generally, let $f$ be an arbitrary smooth function on a manifold $M$ and let $X_+$, $X_-$ 
be two smooth vector fields defined on $M$.  Then
\begin{equation}\label{equation-generaldiscont}
X = \Big(\frac{1+\sgn(f)}{2}\Big) X_+ + \Big(\frac{1-\sgn(f)}{2}\Big) X_-
\end{equation}
is a discontinuous 1-foliation with discontinuity locus $\Sigma = f^{-1}(0)$. 
\begin{Example}\label{example-generaldiscont}
The figure \ref{example4-fig} illustrates 
a discontinuous 1-foliation in $M = \mathbb{R}^2$, where $f(x,y) = y^2 - x^2(x + 1)$,  $X_+ = -y \frac{\partial}{\partial x} + x \frac{\partial}{\partial y}$ 
and $X_- = Ê-X_+$.
\begin{figure}[htbp]
\begin{center}
{ \input{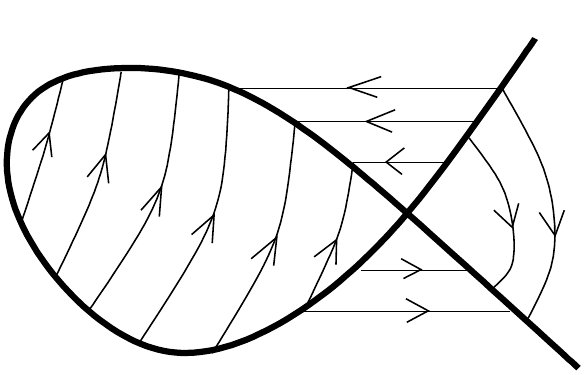_t}}
\caption{Discontinuous 1-foliation of  the example \ref{example-generaldiscont}.}
\label{example4-fig}
\end{center}
\end{figure}
\end{Example}

As the above examples show, there is no reason to expect that $\vF$ can be extended smoothly 
(or even continuously) to $\Sigma$.  In order to circumvent this difficulty, one possibility is to modify 
the ambient space $M$ by a {\em blowing-up }(or a sequence of blowing-ups) and to expect that the
 modified foliation extends smoothly to the whole ambient space.  We refer the reader to \cite{M}, 
 chapter 5 for a detailed definition of the blowing-up operation in the category of manifold with corners.

We will say that discontinuous 1-foliation $\vF$ defined in $M$ and with discontinuity locus
 $\Sigma$ is {\em blow-up smoothable }if there exists a locally finite sequence of blowing-ups (with smooth centers)
$$
M = M_0 \longleftarrow \cdots \longleftarrow \cdots \longleftarrow  \widetilde{M}
$$
and a smooth 1-foliation $\widetilde{\vF}$ defined in $\widetilde{M}$ such that:
\begin{enumeratenumeric}
\item  The map $\Phi :  \widetilde{M} \rightarrow M$ is a diffeomorphism outside $\Phi^{-1}(\Sigma)$, and
\item $\widetilde{\vF}$ and $\vF$ are related by $\Phi$, seen as a map from 
$\widetilde{M} \setminus \Phi^{-1}(\Sigma)$ to $M \setminus \Sigma$.
\end{enumeratenumeric}
Let us show that the examples studied above are blow-up smoothable.

\begin{Example}
Consider the discontinuous 1-foliation defined in $\R^2$ by the vector field in (\ref{sign-vf}), 
which has discontinuity locus $\Sigma = \{y = 0\}$.  If we denote by $\cS^0 = \{\pm 1\}$ the $0^{th}$-sphere, 
the blowing-up of $\Sigma$ is defined by the map
$$
\begin{array}{cccl}
\Phi\; : & \R \times (\cS^0\times\R_{\ge 0}) & \longrightarrow & \R^2\\
& (u,\;\;(\pm1, r)) & \longmapsto & x = u,\, y = \pm r
\end{array}.
$$ 
The resulting 1-foliation in $\R \times (\cS^0\times\R_{\ge 0})$, given by 
$\frac{\partial}{\partial u} \mp \frac{\partial}{\partial r}$  is clearly smooth.
\end{Example}

\begin{Example}
Consider the discontinuous foliation defined by (\ref{radial-vf}). The blowing up of
 the origin is defined by the polar coordinates map
$$
\begin{array}{cccl}
\Phi\; : \;& \cS^1\times\R_{\ge 0} & \longrightarrow & \R^2\\
& (\theta,r) & \longmapsto & x = r \cos(\theta),\,  y = r \sin(\theta)
\end{array}
$$ 
and an easy computation shows that this foliations is mapped to 
$$
\cos(2\theta) \frac{\partial}{\partial \theta} + r \frac{\partial}{\partial r}
$$
which is clearly a $C^\infty$ vector field on $\cS^1\times\R_{\ge 0}$.
\end{Example}

\begin{Example}
Consider the discontinuous foliation defined by (\ref{sign2-vf}). This foliation is blow-up 
smoothable by a sequence of three blowing-ups:  
$$
\R^3 = M_0 \stackrel{\Phi_1}{\longleftarrow} M_1\stackrel{\Phi_1}{\longleftarrow} M_2\stackrel{\Phi_1}{\longleftarrow} M_3
$$
with respective centers given by the $z$-axis,  the strict transform of the hyperplane 
$\{x = 0\}$ and the strict transform of hyperplane $\{ y = 0\}$.
\end{Example}
Notice however that there are discontinuous 1-foliations which are not blow-up smoothable.
\begin{Example}
Consider the discontinuous 1-foliation in $\mathbb{R}^2$ defined as in (\ref{equation-generaldiscont}), where we take
$$
f(x,y) = y^2 - e^{-\frac{1}{x^2}} \sin^2\big(1/x)
$$
and $X_+ = \partial/\partial y$, $X_- = \partial/\partial x$.  The set $\mathbb{R}^2 \setminus f^{-1}(0)$ 
has an infinite number of open connected components in any neighborhood of the origin, and this property cannot be destroyed by a locally finite 
sequence of blowing-ups.
\begin{figure}[htbp]
\begin{center}
{ \input{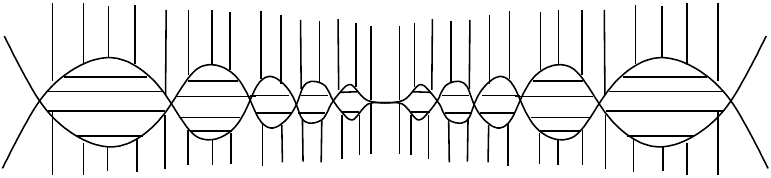_t}}
\caption{Discontinuous 1-foliation which is not blow-up smoothable.}
\label{examplenonsmooth-fig}
\end{center}
\end{figure}

\end{Example}

\subsection{Piecewise smooth 1-foliations}\label{subsect-piecewise-smooth}
A natural problem which arises is to establish conditions which guarantee that a discontinuous 
1-foliation is blow-up smoothable.  For this, we will introduce a particular class of discontinuous 
1-foliations where, roughly speaking, we require that firstly each local vector field which defines
 such foliations extends smoothly to the discontinuity locus, and secondly that the discontinuity locus 
 is an analytic subset of $M$.

More formally, let $\vF$ be a discontinuous 1-foliation defined on a manifold $M$ and with 
discontinuity locus $\Sigma$.  A {\em local multi-generator }of $\vF$ is a pair $(U,\{X_1,\ldots,X_k\})$ 
satisfying the following conditions: 

\begin{enumeratenumeric}
\item $U$ is an open set of $M$ and we can write $U \setminus \Sigma$ as a finite disjoint union 
\begin{equation}\label{componentsofSigma}
U \setminus \Sigma = U_1 \sqcup \cdots \sqcup U_k
\end{equation}
of open sets $U_1,..,U_k$.
\item For each $i = 1,..,k$, $X_i$ is a smooth vector field {\em defined in $U$} and such that 
\begin{equation}\label{componentsofSigma-generators}
(U_i, X_i|_{U_i}) \text{Ê is a local generator of $\vF$.}
\end{equation}
\end{enumeratenumeric}

We will say that  $\vF$ is {\em piecewise smooth }if there exists a collection $\mathcal{C}$ of 
local multi-generators as above whose domain forms an open covering of  $\Sigma$ and the 
following  compatibility condition holds: For each two local multi-generators
$$(U,\{X_1,\ldots,X_k\}),\quad (V,\{Y_1,\ldots,Y_l\})$$
belonging to $\mathcal{C}$, there exists a strictly positive
smooth function $\varphi$ defined in $U \cap V$ such that
\begin{equation}\label{local-compatibilityofgen}
X_i = \varphi\, Y_j \quad \text{ on }\quad U_i \cap V_j
\end{equation}
for each pair of indices $i= 1,\ldots,k$ and $j = 1,\ldots,l$.

\begin{Remark}
In what follows, it will be important to require the transition function $\varphi$ to 
be {\em the same }on all intersections $U_i \cap V_j$.   
\end{Remark}
\begin{figure}[htbp]
\begin{center}
{ \input{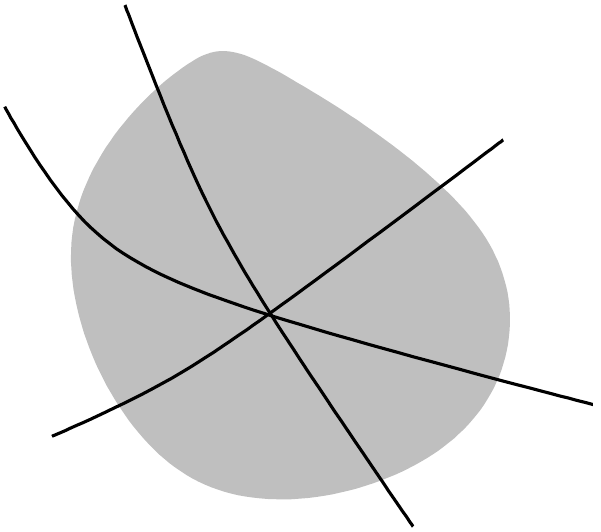_t}}
\label{multi-generator}
\caption{The neighborhood of a point in $\Sigma$.}
\end{center}
\end{figure}

\begin{Example}The Example \ref{example-1} exhibits a piecewise smooth 1-foliation, since $\vF$ 
is defined simply by restricting the constant vector fields
$$
X_+ = \frac{\partial}{\partial x} - \frac{\partial}{\partial y} \qquad \text{and}\qquad X_- = \frac{\partial}{\partial x} + \frac{\partial}{\partial y}
$$
to the subsets $U_+ = \{y > 0\}$ and $U_- = \{y < 0\}$, respectively.  Similarly, the 
1-foliation in Example~\ref{example-3}Ê
 is piecewise smooth.  On the other hand, the discontinuous vector field $X$ in 
 Example \ref{example-2} is not
  piecewise smooth. In fact, the vector field $X$ is not smooth at the origin and therefore 
  the foliation  can not be smoothly extended to $ \Sigma = \{(0,0) \}.$
\end{Example}

A simple consequence of the definition of piecewise smooth 1-foliations is the following:

\begin{theorem}\label{theorem-smoothlocus}
Let $\vF$ be a piecewise smooth 1-foliation on a manifold $M$ whose discontinuity locus $\Sigma$ is an
smooth submanifold of codimension one. Then, $\vF$ is blow-up smoothable.
\end{theorem}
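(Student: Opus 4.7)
The plan is to blow up the discontinuity locus $\Sigma$ itself. Since $\Sigma$ is a smooth codimension-one submanifold of $M$, the (real, oriented) blowing-up $\Phi : \widetilde{M} \to M$ with center $\Sigma$ is well-defined in the manifold-with-corners category, $\Phi$ is a diffeomorphism outside $\Phi^{-1}(\Sigma)$, and over any sufficiently small connected tubular neighborhood $U$ of a piece of $\Sigma$ the preimage $\Phi^{-1}(U)$ breaks into two boundary-carrying pieces $\widetilde{U}_1, \widetilde{U}_2$, one for each local side of $\Sigma$. The goal is then to produce a smooth foliation $\widetilde{\vF}$ on $\widetilde{M}$ which is $\Phi$-related to $\vF$ on $\widetilde{M}\setminus \Phi^{-1}(\Sigma)$.

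The first step is to cover $\Sigma$ by the domains $U$ of local multi-generators $(U,\{X_1,\ldots,X_k\})$ belonging to the distinguished collection $\mathcal{C}$ certifying the piecewise smoothness of $\vF$. Because $\Sigma$ is smooth of codimension one, after shrinking each $U$ to a tubular slab we may assume $U \setminus \Sigma = U_1 \sqcup U_2$ has exactly two components, so $k = 2$ and $X_1, X_2$ are vector fields \emph{smooth on all of $U$}, each generating $\vF$ on its respective side. I would then define $\widetilde{X}_i := \Phi^* X_i$ on $\widetilde{U}_i$. Since $X_i$ extends smoothly across $\Sigma$ inside $U$, the pullback $\widetilde{X}_i$ is a smooth (non-flat) vector field on the closed half-space $\widetilde{U}_i$, smooth all the way up to the boundary component $\widetilde{U}_i \cap \Phi^{-1}(\Sigma)$.

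Next, I would verify that the collection $\{(\widetilde{U}_i, \widetilde{X}_i)\}$ assembles into a bona fide smooth oriented 1-foliation on $\widetilde{M}$. Away from $\Phi^{-1}(\Sigma)$ this is automatic, since $\Phi$ is a diffeomorphism there and $\vF$ is already smooth. The substantive point occurs along the boundary: given two multi-generators $(U,\{X_1,X_2\})$ and $(V,\{Y_1,Y_2\})$ in $\mathcal{C}$ whose blown-up pieces $\widetilde{U}_i$ and $\widetilde{V}_j$ meet along a boundary portion, the piecewise-smooth compatibility condition supplies a \emph{single} strictly positive smooth function $\varphi$ on $U \cap V$ such that $X_i = \varphi\, Y_j$ on $U_i \cap V_j$. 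Pulling back, $\widetilde{X}_i = (\Phi^*\varphi)\,\widetilde{Y}_j$ holds on the interior part of $\widetilde{U}_i \cap \widetilde{V}_j$, and since both sides are smooth up to the boundary the identity extends by continuity to the boundary portion as well, with $\Phi^*\varphi$ still a strictly positive smooth function. This is exactly the transition relation \eqref{transition-function} required.

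The main obstacle is precisely this coherence of transition data along the boundary, and it is overcome by the strong form of compatibility baked into the definition of piecewise smoothness: the \emph{same} positive function $\varphi$ is forced to mediate all pairs $(i,j)$, so after blow-up the smooth extensions on each side glue together under a single smooth positive transition function. With this in hand, $\widetilde{\vF}$ is a smooth oriented 1-foliation on $\widetilde{M}$ that is $\Phi$-related to $\vF$ outside $\Phi^{-1}(\Sigma)$, proving that $\vF$ is blow-up smoothable.
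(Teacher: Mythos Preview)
Your proposal is correct and follows essentially the same route as the paper: a single blow-up with center $\Sigma$, followed by declaring the pull-back of each side's smooth extension to be a local generator on the corresponding boundary piece of $\widetilde{M}$. If anything, you are more explicit than the paper about why the resulting local generators are compatible along the exceptional divisor, correctly isolating the role of the \emph{single} transition function $\varphi$ in condition~\eqref{local-compatibilityofgen}; the paper simply asserts that the construction is unambiguous.
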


\begin{proof}
We consider the smooth map $\Phi : N \rightarrow M$ defined by the blowing-up with center 
$\Sigma$, and exceptional divisor $D = \Phi^{-1}(\Sigma)$.
The smooth foliation $\vG$ in $N$ is now defined by describing its local generator 
at each point $q\in N$.  
We consider separately the case where $q \in N\setminus D$ and $q \in D$.  
In the former case, we can
 choose a generator $(U,X)$ of $\vF$ defined in a sufficiently small neighborhood 
 an of $p = \Phi(q)$ and decree that
$$
(V,Y) = \big( \Phi^{-1}(V),\Phi^*Y \big)
$$
is a local generator of $\vG$ near $q$.  Notice that this construction defines 
unambiguously the 1-foliation 
on $N \setminus D$, since it is independent of the choice of $(U,X)$.

Suppose now that $q \in D$, i.e. that $p = \Phi(q)$ lies in $\Sigma$.  
Then, we can choose local coordinates 
$(x_1,..,x_{n-1},y)$ in a neighborhood $U$ of $p$ such that $\Sigma = \{y = 0\}$ 
and the blowing-up 
map assumes the form
$$
\begin{array}{rcl}
(\cS^0\times \R^+) \times \R^{n-1} & \longrightarrow & \R^n\\
 (\pm 1,r), (u_1,...,u_{n-1}) & \longmapsto & y = \pm r,\, x_1 = u_1,\,\ldots,\, x_{n-1} = u_{n-1}.
\end{array}
$$ 
Up to reducing $U$ to some smaller neighborhood of $p$, we can assume that $U \setminus \Sigma$ 
can be written as the disjoint union of connected subsets
$U_+ = \{y > 0\}$, $U_- =\{ y < 0\}$ and that there exists two smooth vector fields $X_+,X_-$ defined on $U$ 
such that $(U_+,X_+)$ and $(U_-,X_-)$ are local generators of $\vF$.  
Now, by the expression of the blowing-up
 map either $V_+ = \Phi^{-1}(U_+)$ or $V_- = \Phi^{-1}(U_-)$ is an open 
 neighborhood of $q$ in $N$.
Therefore, according to the choice of the $\pm$ sign, we decree that the 
local vector field $(V_\pm,\Phi^* X_\pm)$ 
is a local generator of $\vG$ at $q$.

This procedure defines in an unambiguous way a smooth 1-foliation $\vG$ on $N$, 
which is moreover related to $\vF$ by $\Phi$. 
\end{proof}
A natural question which arises is whether the above result can be generalized to the case where $\Sigma$ is a smooth submanifold of higher codimension.
\begin{Example}\label{example-flatcone}
Consider the piecewise-smooth 1-foliation on $\mathbb{R}^3$  defined by
$$X = z^k\frac{\partial}{\partial x} + \varphi\left(\frac{y z^l}{e^{-1/z^2}}\right) \frac{\partial}{\partial y} \quad \text{ if $z \ne 0$}, \quad X = z^k \frac{\partial}{\partial x} + \sgn(y) \frac{\partial}{\partial y},\quad \text{ if $z = 0$}.$$
where $\varphi$ is a monotonic transition function as defined in the Introduction and $k,l \in \mathbb{N}$ are arbitrary positive integers.  Notice that 
$X$ is smooth outside the discontinuity locus $\Sigma = \{y=z=0\}$, which is of codimension 2.  A blowing up with center the origin will produce (in the $z$-directional chart) the same 
expression with the integer $l$ replaced by $l+1$.   Similarly, a blowing-up with center $\Sigma$ will produce 
exactly the same expression with $k$ and $l$ replaced by $k+1$ and $l+1$ respectively.  
\begin{figure}[htbp]
\begin{center}
{ \input{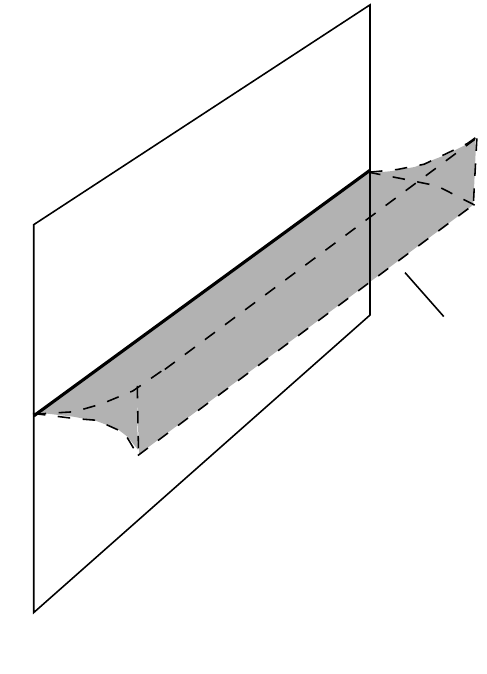_t}}
\label{flatcone-fig}
\caption{In the Example \ref{example-flatcone}, the region $\{(x,y,z) : -1\le \varphi\left(\frac{y z^l}{e^{-1/z^2}}\right)\le 1\}$ is a flat cone with edge on $\Sigma$.}
\end{center}
\end{figure}
Therefore, 
no sequence of blowing-ups will allow a $C^\infty$ extension of this vector field to the exceptional divisor.
\end{Example}

Our next goal is to obtain a similar result in the case where $\Sigma$ is a codimension one {\em singular subvariety}.  
For this, we need to impose another regularity condition, which will allow us to use 
the Theorem of Resolution of Singularities.

We will say that $\vF$ has an {\em analytic discontinuity locus }if the ambient space 
$M$ is an analytic 
manifold and the discontinuity locus $\Sigma$ is a {\em globally defined analytic subset }of $M$.  
In other words, we 
assume $\Sigma = \Sigma(f)$ is the vanishing locus of a finite collection of global analytic functions 
$f = (f_1,\ldots,f_m)$ defined on $M$ (\footnote{According to \cite{C}, Proposition 15, and Grauert's 
embedding theorem \cite{Gr}, this is equivalent to say $\Sigma$ is the vanishing locus of a coherent 
sheaf of ideals $\mathcal{I}$ defined on $M$.}). 

Under the above hypothesis, there exists an unique filtration by semianalytic sets (see \cite{Lo})
\begin{equation}\label{stratification}
\Sigma^0  \subset \Sigma^1 \subset \cdots \subset \Sigma^d = \Sigma
\end{equation}
where, for each $k = 1,\ldots,d$, the set $\Sigma^k \setminus \Sigma^{k-1}$ is a smooth manifold of dimension $k$.  
Using this decomposition, we say that $d$ is the {\em dimension }of $\Sigma$ and that $\Sigma^{\tmop{reg}} = \Sigma^d \setminus \Sigma^{d-1}$
 is the {\em regular part }of $\Sigma$.   The complementary set $\Sigma^{\tmop{exc}}  = \Sigma \setminus \Sigma^{\tmop{reg}}$ is 
 called the {\em exceptional locus}.
\begin{Remark}
We observe that, in general, the inclusion $\Sigma^{\tmop{reg}}\subset\Sigma^{\tmop{smooth}}$ is strict.  For instance, 
the Whitney umbrella $\Sigma= \{z^2 - x y^2 = 0\}$ is such that $\Sigma^{\tmop{exc}} = \{y=z=0\}$ contains strictly 
$\Sigma^{\tmop{sing}} = \{x \ge 0, y=z=0\}$.  Taking the complementaries it follows that 
$\Sigma^{\tmop{reg}}\subsetneq \Sigma^{\tmop{smooth}}$.
\end{Remark}
Under the above assumptions, we can apply the Theorem of Resolution of Singularities for globally 
defined real analytic sets (see e.g.\ \cite{BM}).  As a result,  we conclude 
that there exists a proper analytic 
map $\Phi: N \rightarrow M$, defined by a locally finite sequence of blowing-ups, 
such that:
\begin{enumeratenumeric}
\item $\Phi$ is a diffeomorphism outside $\Phi^{-1}(\Sigma^{\tmop{exc}})$.
\item $D = \Phi^{-1}(\Sigma^{\tmop{exc}})$ is a locally finite union of boundary components 
$$\bigcup_{i \in I}D_i \subset \partial N$$ 
of codimension one.
\item The closure of $\Phi^{-1}(\Sigma^{\tmop{reg}})$ is a smooth submanifold 
$\Omega \subset N$.
\end{enumeratenumeric}
The next result states that, under the above conditions,  the foliation $\vF$ 
pulls-back to a discontinuous foliation in 
$N$ which has a smooth discontinuity locus.
\begin{theorem}\label{theorem-singularlocus}
Let  $\vF$ is a piecewise smooth 1-foliation with analytic discontinuity locus.  
Then, using 
the above notation, there is a piecewise smooth 1-foliation $\vG$ defined on $N$, 
which is related to $\vF$ by $\Phi$, 
and whose discontinuity locus is $\Omega$ .
\end{theorem}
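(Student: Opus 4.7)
The plan is to imitate the proof of Theorem \ref{theorem-smoothlocus}: construct $\vG$ pointwise by specifying a local (multi-)generator at each $q \in N$, distinguishing the cases $q \in N \setminus D$ and $q \in D$. Since $\Phi$ restricts to a diffeomorphism $N \setminus D \to M \setminus \Sigma^{\tmop{exc}}$ and $\vF|_{M \setminus \Sigma^{\tmop{exc}}}$ is piecewise smooth with discontinuity locus $\Sigma^{\tmop{reg}}$, pulling back any local multi-generator of $\vF$ via $\Phi$ yields a piecewise smooth 1-foliation on $N \setminus D$ with discontinuity locus $\Phi^{-1}(\Sigma^{\tmop{reg}}) = \Omega \setminus D$.

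Near a point $q \in D$ with $p = \Phi(q) \in \Sigma^{\tmop{exc}}$, I would choose a local multi-generator $(U,\{X_1,\ldots,X_k\})$ of $\vF$ at $p$ with decomposition $U \setminus \Sigma = U_1 \sqcup \cdots \sqcup U_k$, set $V = \Phi^{-1}(U)$, and consider the pullbacks $\Phi^* X_i$. Each is smooth on $V$ but may vanish to some order along $D$; dividing by a suitable power of a local defining equation for $D$ yields non-flat smooth vector fields $\widetilde{X}_i$ on $V$ which generate the same oriented 1-foliation as $\Phi^* X_i$ outside $D$. The local multi-generator of $\vG$ at $q$ is then built by decomposing $V \setminus \Omega$ into its connected components $V_1 \sqcup \cdots \sqcup V_l$ and assigning to each component $V_s$ the vector field $\widetilde{X}_i$ for some (any) index $i$ with $\Phi^{-1}(U_i) \subset V_s$.

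The main obstacle is the geometric claim that underpins this construction: the strict transform $\Omega$ together with the exceptional divisor $D$ separates the lifts $\Phi^{-1}(U_i)$ cleanly, in the sense that each $\Phi^{-1}(U_i)$ lies in exactly one connected component of $V \setminus \Omega$, and whenever two indices $i,j$ correspond to the same component the extensions $\widetilde{X}_i$ and $\widetilde{X}_j$ define the same oriented foliation there. The first assertion is a consequence of the normal-crossings property of the resolution together with the fact that $\Omega$ has codimension one in $N$; the second follows from uniqueness of non-flat smooth extensions of an oriented 1-foliation across a set of measure zero in a connected open set, which forces the ratio $\widetilde{X}_i / \widetilde{X}_j$ to be a positive smooth function on $V_s$. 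Granting these, the compatibility condition (\ref{local-compatibilityofgen}) between overlapping local multi-generators of $\vG$ is obtained by pulling back the shared positive transition function of $\vF$ and adjusting by the positive smooth ratio of normalization factors, so that the multi-generators assemble into a global piecewise smooth 1-foliation $\vG$ on $N$ with discontinuity locus $\Omega$, related to $\vF$ by $\Phi$.
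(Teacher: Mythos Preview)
Your overall plan matches the paper's: pull back multi-generators of $\vF$ through $\Phi$ and normalize along the exceptional divisor $D$. But there is a real gap in how you handle points $q \in D$.

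The crux, as you say, is the separation of the lifted pieces $\Phi^{-1}(U_i)$. However, you state this in the trivial direction (``each $\Phi^{-1}(U_i)$ lies in one component of $V\setminus\Omega$'') rather than the direction you actually need (``each component of $V\setminus\Omega$ near $q$ meets at most one $\Phi^{-1}(U_i)$''), and then try to close the residual ambiguity with a uniqueness principle that is false. The vector fields $\partial_x$ and $x^2\,\partial_x$ on $\R$ are both non-flat and generate the same oriented $1$-foliation on $\R\setminus\{0\}$, yet their ratio vanishes at $0$, so they do \emph{not} define the same foliation across the origin: there is no general ``uniqueness of non-flat smooth extensions of an oriented $1$-foliation across a measure-zero set''. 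Worse, even off $D$ there is no reason for $\widetilde X_i$ and $\widetilde X_j$ to agree on $V_s$, since $X_j$ is only required to generate $\vF$ on $U_j$ and may be completely unrelated to $\vF$ on $U_i$.

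The paper sidesteps this by proving the sharp separation claim directly. It reduces by induction to a single blow-up with smooth center $C$, and observes in explicit local coordinates that for $q \in D\setminus\Omega$ a small neighborhood of $q$ meets \emph{exactly one} $\Phi^{-1}(U_i)$. Thus only one pullback $\Phi^*X_i$ needs to be extended across $D$ at $q$; a single generator (not a multi-generator) suffices there, and no comparison between different $\widetilde X_i$ is ever required. Well-definedness under change of multi-generator then follows immediately from (\ref{local-compatibilityofgen}): since $X_i' = \varphi X_i$ with $\varphi$ smooth and strictly positive on all of $U$, the normalizing exponent $m_{\min}$ along $D$ is the same for $\Phi^*X_i$ and $\Phi^*X_i'$, so the ratio of the normalized pullbacks is exactly $\Phi^*\varphi$ --- not a possibly-vanishing power of the defining equation of $D$, as your phrase ``ratio of normalization factors'' would allow.
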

\begin{proof}
We describe separately the local definition of  $\vG$ in points lying in $N\setminus (D\, \cup\, \Omega)$, 
in points lying in  $D \setminus \Omega$, and then in 
points lying in $\Omega$.  
If $q \in N\setminus (D \,\cup\, \Omega)$ then choose a generator $(U,X)$ of 
$\vF$ defined in a sufficiently small 
neighborhood of $p = \Phi(q)$ and decree that
$$
(V,Y) = \big( \Phi^{-1}(V),\Phi^*Y \big)
$$
is a local generator of $\vG$ near $q$.  Notice that this construction defines 
unambiguously the 1-foliation 
on $N \setminus (D \cup \Omega)$, since it is independent of the choice of $(U,X)$.

Let us show now how to extend $\vG$ smoothly to $D \setminus \Omega$. By an induction argument, it suffices to consider the case where $\Phi: N \to M$ is defined by a single blowing-up with center on an analytic submanifold $C \subset M$, and such that $D = \Phi^{-1}(C)$.

Given a point $q \in D \setminus \Omega$, let $p = \Phi(q)$ 
be its image in $C$.  According to the definition of piecewise smooth 1-foliation, 
choose a local multi-generator $(U,\{X_1,\ldots,X_k\})$ of $\vF$ at $p$.  Then, 
considering the disjoint decomposition (\ref{componentsofSigma}), there exists precisely one index
$i \in \{1,\ldots,k\}$, say $i = 1$, such that $W = \Phi^{-1}(U_1)$ is an open neighborhood of $q$.   

Up to reducing these neighborhoods, we choose local trivializing coordinates $(y_1,\ldots,y_n)$ at $W$ and $(x_1,..,x_n)$ at $U_1$, respectively, 
such that $C = \{x_1 = \cdots = x_d = 0\}$ and $D = \{y_1 = 0\}$.  Further, we can assume that, in these coordinates,
the blowing-up map assumes the form 
$$
x_1 = y_1, x_2 = y_1 y_2,\ldots, x_d = y_1 y_d, \quad\text{Êand }\quad \text x_{d+1} = y_{d+1}, \ldots, x_n = y_n.
$$
From the assumption that $X_1$ is non-flat, it follows that the set 
$$
E = \{m \in \Z \; : \; y_1^m\  \Phi^* X_1 \text{Ê  extends smoothly to $\{y_1 = 0\}$} \}
$$
has the form $m_\mini + \N$, for some minimal element $m_\mini \in \Z$.  Indeed, if we expand the vector field $X_1$ as $a_1\frac{\partial}{\partial x_1} + 
\ldots + a_n \frac{\partial}{\partial x_n}$, with $a_i \in C^\infty(U_1)$, then its pull-back under $\Phi$ has the form $\Phi^* X_1 = b_1\frac{\partial}{\partial y_1} + 
\ldots + b_n \frac{\partial}{\partial y_n}$ where 
$$
\begin{array}{l}
b_1 = \Phi^*(a_1), b_2 = \Phi^*\big((a_2 x_1- a_1 x_2)/x_1^2\big), \ldots, b_d = \Phi^*\big((a_d x_1- a_1 x_d)/x_1^2\big), \\
b_{d+1}Ê= \Phi^*(a_{d+1}), 
\ldots, b_nÊ= \Phi^*(a_n)
\end{array}
$$
In particular, $-m_\mini$ is algebraically defined as the minimum of all valuations
$$
\mathrm{val}_{y_1}(\widehat{b}_1),\ldots,  \mathrm{val}_{y_1}(\widehat{b}_n)
$$
where $\widehat{b}$ denotes the formal series expansion of $b \in C^\infty(W)$ in powers of the $y_1$-variable, seen as element of the field $C^\infty(W \cap D)((y_1))$ of formal Laurent series in $y_1$ with coefficients smooth functions in $y_2,\ldots,y_n$ (\footnote{Notice that this minimum is well-defined by the non-flatness assumption.  Furthermore, this shows that 
$m_\mini$ is uniform, i.e. independent of the choice of the point $q \in W\cap D$.}).
Using this we define $Y = y_1^{m_\mini} \Phi^* X_1$ and decree that
$(W,Y)$
is a local generator of $\vG$ at $q$.  

Let us show that this local definition is independent of the choice of the local multi-generator 
$(U,\{X_1,\ldots,X_k\})$. For this, suppose that we choose another  local 
multi-generator $(U',\{X_1',\ldots,X_l'\})$ of $\vF$ at $p$. 
Then, up to a reordering of indices and a restriction to some possibly smaller neighborhood of $p$, 
we can assume that $U_1 = U_1'$ and that 
$$X_1' = \varphi X_1$$
for some smooth function $\varphi$ which is strictly positive in $U$ (we use here the condition 
(\ref{local-compatibilityofgen}) in the definition of a piecewise smooth 1-foliation).  
Taking the pull-back of this relations through $\Phi$, one obtains 
$$\Phi^* X_1'=(\Phi^* \varphi) \, \Phi^* X_1,\quad\text{Êwhere }\quad\Phi^* \varphi \stackrel{\rm def}{=} \varphi\circ\Phi$$
which shows that the set $E$ defined above coincides with the set
 $E' = \{m \in \Z :  y_1^m \Phi^* X_1' \text{Ê  extends smoothly  $\{y_1 = 0\}$} \}$.  
 Consequently, 
$$Y' \stackrel{\rm def}{=} y_1^{m_\mini} \Phi^* X_1 = y_1^{m_\mini} (\Phi^* \varphi) \, \Phi^* X_1^\prime = (\Phi^* \varphi)\,  Y$$
which shows that the foliation $\vG$ is well defined at $q$.\footnote{Notice that this construction is a slight generalization of the blowing-up of local foliated vector fields defined in \cite{DR}.}

\begin{figure}[htbp]
\begin{center}
{ \input{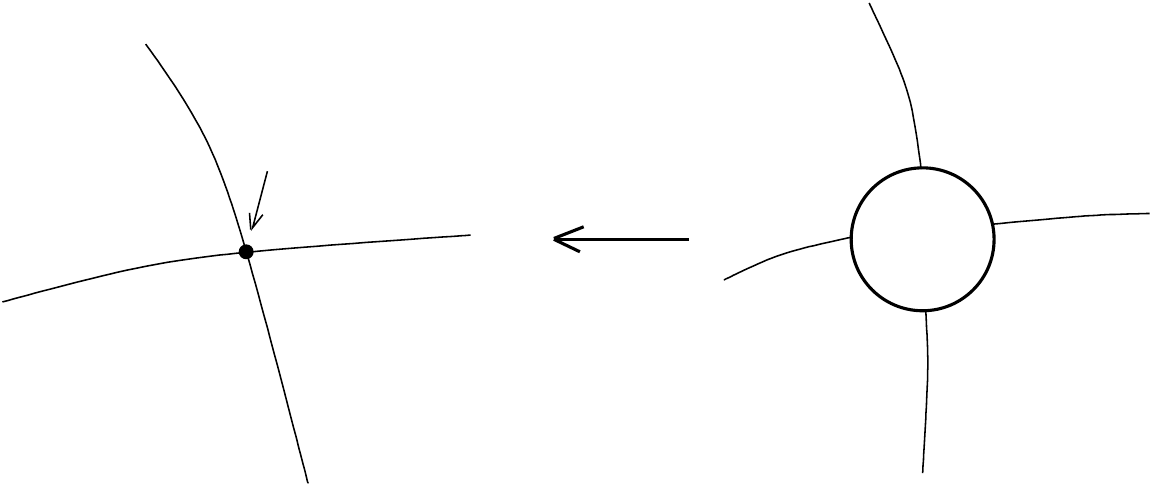_t}}
\caption{Eliminating $\Sigma^{\tmop{exc}}$.}
\label{resolution-fig}
\end{center}
\end{figure}

It remains to construct a local generator of $\vG$ in each point $q \in \Omega$.  
The reasoning is very similar to the previous cases, and is left to the reader.
\end{proof}
Let us show that the previous two Theorems can be combined to give a general 
smoothing procedure which extends Theorem \ref{theorem-smoothlocus} to the case 
where $\Sigma$ is singular. 
\begin{corollary}\label{corr}
Under the assumptions of the Theorem \ref{theorem-singularlocus}, suppose further that the 
discontinuity locus of $\vF$ has codimension one.  Then,  $\vF$ is blow-up smoothable.  
\end{corollary}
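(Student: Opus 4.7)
The plan is to proceed in two stages, obtaining the desired smoothing as a composition of the blow-up sequence provided by Theorem \ref{theorem-singularlocus} with the single blowing-up provided by Theorem \ref{theorem-smoothlocus}.

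\textbf{Stage 1.} First I would invoke Theorem \ref{theorem-singularlocus} directly. It furnishes a proper analytic map $\Phi_1 : N \rightarrow M$, given by a locally finite sequence of blowing-ups with smooth centers, which is a diffeomorphism outside $\Phi_1^{-1}(\Sigma^{\mathrm{exc}})$, together with a piecewise smooth 1-foliation $\vG$ on $N$ related to $\vF$ by $\Phi_1$ and whose discontinuity locus is $\Omega = \overline{\Phi_1^{-1}(\Sigma^{\mathrm{reg}})}$.

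\textbf{Stage 2.} Next I would verify that $\Omega$ is a smooth submanifold of codimension one in $N$. By item 3 in the properties of the resolution map listed just before Theorem \ref{theorem-singularlocus}, $\Omega$ is smooth. Since $\Phi_1$ is a diffeomorphism onto its image outside $\Phi_1^{-1}(\Sigma^{\mathrm{exc}})$, the open dense subset $\Phi_1^{-1}(\Sigma^{\mathrm{reg}})\subset \Omega$ inherits the codimension of $\Sigma^{\mathrm{reg}}$, which equals one by hypothesis on $\vF$. Hence $\Omega$ has codimension one everywhere. I would then apply Theorem \ref{theorem-smoothlocus} to the piecewise smooth 1-foliation $\vG$ on $N$, obtaining a single blowing-up $\Phi_2 : \widetilde{M} \rightarrow N$ with center $\Omega$, and a smooth 1-foliation $\widetilde{\vF}$ on $\widetilde{M}$ related to $\vG$ by $\Phi_2$.

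\textbf{Stage 3.} Finally I would set $\Phi = \Phi_1 \circ \Phi_2 : \widetilde{M} \rightarrow M$, which is by construction a locally finite sequence of blowing-ups with smooth centers. Outside $\Phi^{-1}(\Sigma)$, the map $\Phi_2$ is a diffeomorphism (it blows up $\Omega \subset \Phi_1^{-1}(\Sigma)$) and $\Phi_1$ is a diffeomorphism, so $\Phi$ is a diffeomorphism there. The ``related by'' relation is transitive under composition of diffeomorphisms on the complements, so $\widetilde{\vF}$ is related to $\vF$ by $\Phi$, which gives blow-up smoothability.

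The only genuinely delicate point to check is Stage 2: one must confirm that the hypotheses of Theorem \ref{theorem-smoothlocus} really do apply to $\vG$ on $N$, and in particular that $\Omega$ is smooth of codimension one as a subset of a manifold with corners (since $N$ already carries boundary components $D_i$ coming from Stage 1). This is where I would be most careful, but the smoothness and codimension statements come straight from the properties of the resolution of singularities, and the piecewise smooth hypothesis on $\vG$ is exactly the output of Theorem \ref{theorem-singularlocus}, so no additional argument is required.
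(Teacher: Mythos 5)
Your proposal is correct and is exactly the argument the paper intends: the corollary is stated as a direct combination of Theorem \ref{theorem-singularlocus} (resolving $\Sigma^{\tmop{exc}}$ to obtain a piecewise smooth foliation $\vG$ on $N$ with smooth codimension-one discontinuity locus $\Omega$) followed by Theorem \ref{theorem-smoothlocus} (one further blowing-up with center $\Omega$), composing the maps to get the smoothing of $\vF$. Your Stage 2 check that $\Omega$ is smooth of codimension one, and the observation $\Omega\subset\Phi_1^{-1}(\Sigma)$ needed for the diffeomorphism condition, are the right points to verify and are handled correctly.
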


\section{Regularization and sliding dynamics for piecewise smooth foliations} \label{s2}
One disadvantage of the smoothing procedure defined in the previous subsection is that 
it does not allow to define the so-called {\em sliding dynamics }along the discontinuity set.  

Our present goal is to define such sliding dynamics as some sort of limit of the dynamics of 
a nearby smooth 1-foliations. This 
leads us to introduce the notion of regularization.  Later on, we shall see that the blow-up 
smoothing and the regularization can be combined in a fruitful way. 
 
Let $\vF$ be a discontinuous 1-foliation on a manifold $M$, with discontinuity locus $\Sigma$.  
A {\em regularization of $\vF$  (with $p$-parameters) }is a discontinuous 1-foliation $\vF^\reg$ 
defined in the product manifold
$$
M \times ((\R^+)^p,0)
$$
(\footnote{We use the notation $((\R^+)^p,0)$ to indicate in abridged form some open 
neighborhood of the origin in  $(\R^+)^p$.}) which satisfies the three following conditions:
\begin{enumeratenumeric}
\item $\vF^\reg$ is tangent to the fibers of the canonical projection 
$$\pi: M \times ((\R^+)^p,0) \rightarrow ((\R^+)^p,0)$$
\item The restriction $\vF^\reg_0$ of $\vF^\reg$ to the fiber $\pi^{-1}(0)$ coincides with $\vF$,
\item The discontinuity locus $\Sigma^\reg$ of $\vF^\reg$ is a subset of  $\Sigma \times \big\{\prod_i \eps_i = 0\big\}$, 
where $(\eps_1,..,\eps_p)$ are the coordinates in $((\R^+)^p,0)$.
\end{enumeratenumeric}

The last condition implies that, for each $\eps \in ((\R^+)^p,0)$ such that $\prod_i \eps_i  \ne 0$, 
the restriction $\vF^\reg_\eps$ of $\vF^\reg$ to the fiber $\pi^{-1}(\eps)$ is a smooth 1-foliation.  
Furthermore, by the smoothness assumption,
$$
\lim_{\eps \rightarrow 0}\vF^\reg_\eps = \vF^\reg_0
$$
uniformly (in the $C^\infty$ topology) on each compact subset of $M \setminus \Sigma$ 
(\footnote{More precisely, given a point $q \in M\setminus \Sigma$, there 
exists an open neighborhood $U$ of $q$ and a $p$-parameter family of smooth vector 
field $X_\eps$ defined on $U \times ((\R^+)^p,0)$ (and depending smoothly on $\eps$) 
such that $X_\eps$ is  a local generator of $\vF^\reg_\eps|_U$ for each $\eps$.}).

\begin{Example}
In \cite{H} section 1.4, H\" ormander constructs a regularization by convolution. For simplicity, let us 
assume that $\vF$ is defined in $\cR^n$ by a smooth vector field $X$ which extends as 
a locally bounded measurable function to the discontinuity set $\Sigma$.  
Given a function $0 \le \chi \in C_0^\infty(\cR^n)$ such that $\int \chi(y) dy = 1$, we define
$$
X_\eps(x) = \int_{\R^n} X(x-\eps y) \chi(y) dy
$$
Then, it is easy to see that $X_\eps$ is a smooth vector field for each $\eps > 0$ and 
that the one-parameter family of 1-foliations $\vF_\eps$ defined by these vector fields 
is a regularization of $\vF$.
\end{Example}

One disadvantage of this regularization by convolution is that some important features 
of  the dynamics of $\vF$ which appears outside the discontinuity locus can be destroyed 
by small perturbations, and thus not be seen in $\vF_\eps$.  For instance, the saddle connection illustrated in figure~\ref{break-saddle-fig}
would be broken by a generic choice of convolution kernel $\chi$ (although it lies outside the discontinuity locus).

\begin{figure}[htbp]
\begin{center}
{ \input{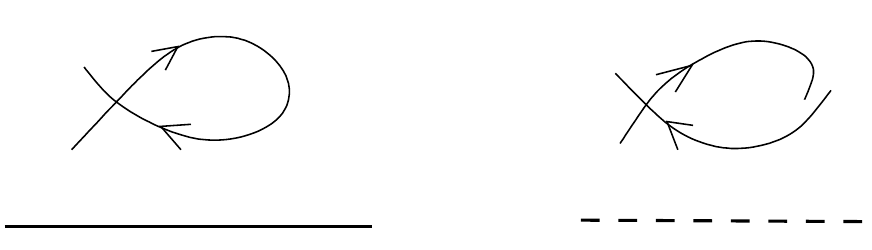_t}}
\caption{Saddle connection for $X_0$ is broken by the regularization by convolution.}
\label{break-saddle-fig}
\end{center}
\end{figure}

In the next subsection, we will describe two regularization methods which keep $\vF$ 
unchanged outside  $O(\eps)$-neighrborhoods of the discontinuity set.  As such, 
we expect to see the full dynamics of $\vF$ outside $\Sigma$ to be reflected at 
$\vF_\eps$, for each sufficiently small $\eps$.   

\subsection{Sotomayor-Teixeira regularization and its generalizations}\label{subsection-ST}
Suppose that the discontinuity locus of $\vF$ is a smooth submanifold 
$\Sigma \subset M$ of codimension one and that we fix the following data:

\begin{enumeratenumeric}
\item A tubular neighborhood map $f: N\Sigma \rightarrow M$, which maps 
the normal bundle $N\Sigma$ diffeomorphically to an open neighborhood $W=f(N\Sigma)$ of $\Sigma$.
\item A smoothly varying metric $|\cdot|$ on the fibers of the bundle 
$N\Sigma \rightarrow \Sigma$ (such that $|p|=0$ iff $p\in \Sigma$).
\item A {\em monotone transition function }$\phi:\cR^ \rightarrow [-1,1]$.
\end{enumeratenumeric}

Using the map $f$, we pull-back $\vF$ to a discontinuous 1-foliation $\vG$ on 
the normal bundle $N\Sigma$, with discontinuity locus given by the zero section 
$\Sigma \subset N\Sigma$. 

Without loss of generality, we can assume that $N\Sigma$ is covered by local 
trivialization charts where the bundle map assumes the form
$$
\begin{array}{rcl}
V \times \cR  &\longrightarrow& V \\
(x,y) & \longmapsto & x
\end{array}
$$ 
for some open set $V\subset \Sigma$, and that $\vF$ has a local multi-generator 
of the form $(V\times\cR,\{X_+,X_-\})$, where $X_+$ (resp. $X_-$) is a smooth vector 
field in $V\times \cR$ which generate $\vG$ on 
$U_+ = \{y >0\}$ (resp. $U_- = \{y < 0\}$).  Furthermore, we can assume that the norm 
on the fibers of $N\Sigma$ is simply the absolute value $|y|$ on $\R$.

For each $\eps > 0$, we now define a smooth vector field $X_\eps$ in $V\times\cR$ as follows
$$
X_\eps \stackrel{\mathrm{def}}{=} \frac{1}{2}\left(1 +  \phi\Big(\frac{y}{\eps}\Big)\, \right) X_+ + \frac{1}{2}\left(1 - \phi\Big(\frac{y}{\eps}\Big)\, \right) X_-
$$
Notice that, by construction
$$
X_\eps(x,y) = 
\begin{cases}
X_+ (x,y)& \text{Êif }y  \ge \eps,\\
X_- (x,y)& \text{Êif }y \le -\eps,\\
\end{cases}
$$
Moreover, if we choose another multi-generator of $\vG$, say $(V\times\cR,\{Y_+,Y_-\})$ 
then it follows from the condition (\ref{local-compatibilityofgen}) in the definition of piecewise 
smooth 1-foliation that $Y_+ = \varphi X_+$ and $Y_- = \varphi X_-$, for some strictly positive 
smooth function $\varphi$.  Therefore, if we define a family
$Y_\eps$ exactly as above but replacing $X_\pm$ by $Y_\pm$, we conclude that 
$$Y_\eps = \varphi X_\eps, \quad \forall \eps > 0$$
In other words, the $X_\eps$ and $Y_\eps$ define precisely the same smooth 1-foliation 
in the domain $V \times \cR$.  

By considering an open covering of $N\Sigma$ by these local trivializations, one defines, 
for each $\eps > 0$, a smooth foliation $\vG_\eps$. By construction, such foliation  coincides
 which the original foliation $\vG$ outside the region $\{p \in N\Sigma : |p| < \eps\}$. 

The {\em Sotomayor-Teixeira regularization }of $\vF$ is the discontinuous 1-foliation $\vF^\reg$ 
defined in the product space $M \times (\R^+,0)$ as follows:
For $\eps = 0$, we let $\vF^\reg_0 = \vF$.  For $\eps > 0$, we define the  foliation $\vF^\reg_\eps$ in $M$ by
$$
\vF^\reg_\eps = 
\begin{cases}
\vF&\text{on }M \setminus W\\
f_* \vG_\eps&\text{on }W
\end{cases} 
$$
It follows from the remark made at the previous paragraph that this defines a globally 
smooth 1-foliation in $M$.  It is easy to verify that the conditions 1.-3. of the definition 
of a regularization are satisfied.

\begin{figure}[htbp]
\begin{center}
{ \input{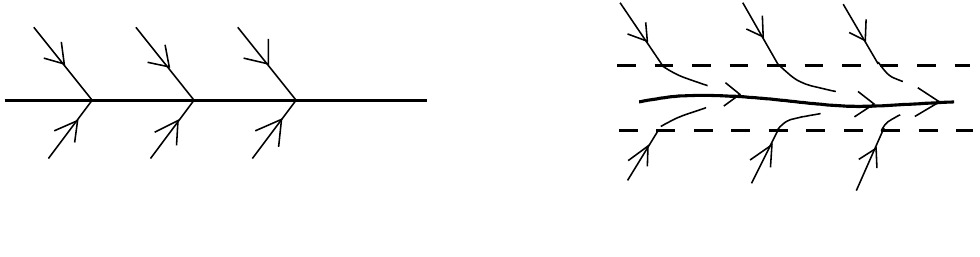_t}}
\caption{The Sotomayor-Teixeira regularization.}
\label{reg-fig}
\end{center}
\end{figure}

More generally, under the same assumptions of the previous example, we can define regularization of $\vF$ 
by dropping the assumption of monotonicity and $x$-independence of the transition function.  
Namely, by replacing the choice of function $\phi$ in item 3. by the choice of a smooth function 
\begin{equation}\label{psi-function}
\psi : \Sigma \times \cR_+ \rightarrow [-1,1]
\end{equation}
such that $\psi(x,t) = -1$ if $t \le -1$ and $\psi(x,t) = 1$ if $t \ge 1$.  Correspondingly, we  
replace the expression of $X_\eps$ given above by
\begin{equation}\label{def-transitionreg}
X_\eps \stackrel{\mathrm{def}}{=} \frac{1}{2}\left(1 +  \psi\Big(x,\frac{y}{\eps}\Big)\, \right) X_+ + \frac{1}{2}\left(1 - \psi\Big(x,\frac{y}{\eps}\Big)\, \right) X_-
\end{equation}
All the remaining steps in the construction remain the same.  The resulting regularization 
will be called a \emph{regularization of transition type}.

\subsection{Double regularization of the cross}\label{subsect-cross}
Let us show a situation where it is natural to consider a multi-parameter regularization.  
 Consider a discontinuous 1-foliation $\vF$ in $\R^3$ with discontinuity locus $\Sigma = \{ xy = 0\}$
  (like in Example~\ref{example-3}). In other words, $\vF$ is defined by four smooth vector fields 
$X_{\pm,\pm}$, 
where the first and the second $\pm$ sign correspond respectively to the sign of the $x$ and $y$ 
coordinates. In other words, each $X_{\pm,\pm}$ is a generator of $\vF$ in one of the four quadrants
$U_{\pm,\pm} = \{(x,y,z) : \sgn(x) = \pm, \sgn(y) = \pm\}$.  

Choosing monotone transitions functions $\phi,\psi$ as above, we consider the two-parameter 
family of smooth vector fields
$$
Y_{\eps,\eta} \stackrel{\mathrm{def}}{=} 
\sum_{\alpha,\beta \in \{+,-\}} \left( 1 + \alpha \phi\Big( \frac{x}{\eps} \Big) \right) \left( 1 + \beta \psi\Big( \frac{y}{\eta} \big) \right) X_{\alpha,\beta}
$$
defined for $\eps,\eta > 0$.  Similarly, we define the two one-parameter families of 
discontinuous vector fields
$$
\begin{array} {ccc}
Z_{\pm,\eta} &\stackrel{\mathrm{def}}{=} & 
\displaystyle \sum_{\beta \in \{+,-\}}  \left( 1 + \beta \psi\Big( \frac{y}{\eta} \big) \right) X_{\pm,\beta}\\
W_{\eps, \pm} &\stackrel{\mathrm{def}}{=} &
\displaystyle \sum_{\alpha \in \{+,-\}} \left( 1 + \alpha \phi\Big( \frac{x}{\eps} \Big) \right)  X_{\alpha,\pm}
\end{array}
$$
defined respectively for $\eta >0$ and $\eps > 0$.  Notice that the discontinuity locus of 
$Z_{\pm,\eta}$ and $W_{\pm,\eta}$ is given respectively by 
$\{x = 0\}$ and $\{y = 0\}$. 

The {\em double-regularization }of $\vF$ is the discontinuous 1-foliation $\vF^\reg$ defined in
 the product space $\R^3 \times ((\cR^+)^2,0)$ as follows.  For each 
parameter value $\eps,\eta \ge 0$, the foliation restricted to fiber $\pi^{-1}(\eps,\eta)$ is
 generated by a discontinuous vector field $K_{\eps,\eta}$ chosen as follows
$$K_{\eps,\eta} = 
\begin{cases}
X_{\pm,\pm} & \text{Êif $\eps = 0$ or $\eta = 0$,}\\
Z_{\pm,\eta}  & \text{Êif $\eps = 0$ and $\eta > 0$},\\
 W_{\eps,\pm}  & \text{Êif $\eps > 0$ and $\eta = 0$},\\
Y_{\eps,\eta} & \text{Êif $\eps > 0$ and $\eta > 0$},
\end{cases}
$$

\begin{figure}[htbp]
\begin{center}
{ \input{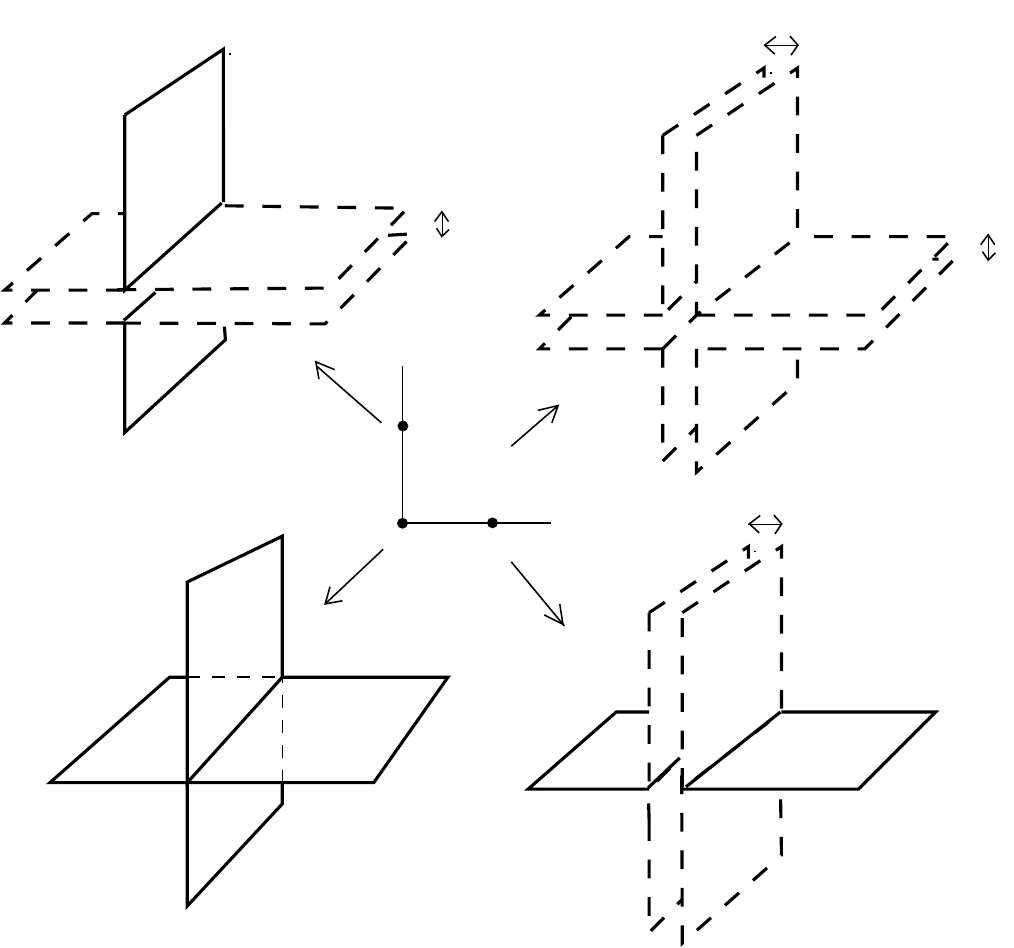_t}}
\caption{The double regularization and the discontinuity locus}
\label{double-reg-fig}
\end{center}
\end{figure}

More generally, assuming that a discontinuous foliation $\vF$ in $\R^n$ has a discontinuity locus 
given by the union of $p$ coordinate hyperplanes, say
$$\Sigma = \Big\{\prod_{i=1}^p x_i = 0\Big\}$$
we can define $p$-parameter regularization of $\vF$ by an easy generalization of the above construction.

\subsection{Sliding regions}
Let $\vF$ be a discontinuous $1$-foliation defined on a manifold $M$ and with discontinuity locus $\Sigma$.   
Given a $p$-parameter regularization $\vF^\reg$ of $\vF$, our present goal is to define a subset 
$\Slide(\vF^\reg)$ of $\Sigma$ where it will be reasonable to study a {\em limit dynamics with
 respect to such given regularization}.

Our definition is local.  We will say that point $p \in \Sigma$ is a {\em point of sliding for $\vF^\reg$ }
if there exists an open neighborhood $U \subset M$ of $p$ and a family of smooth manifolds
$$
S_\eps \subset U
$$
defined for all $\eps \in ((\R^\star)^p,0)$ such that:

\begin{enumeratenumeric}
\item For each $\eps$, $S_\eps$ is invariant by the restriction of $\vF^\reg_\eps$ to $U$.
\item For each compact subset $K \subset U$, the sequence $S_\eps \cap K$ converges to 
$\Sigma \cap K$ as $\eps$ goes to zero in some given Hausdorff metric $d_H$ on compact sets of 
$M$. (\footnote{Obviously, this metric depends on the choice of a Riemannian metric on $M$, 
but the convergence condition is independent of the choice of this metric.})
\end{enumeratenumeric}

\begin{figure}[htbp]
\begin{center}
{ \input{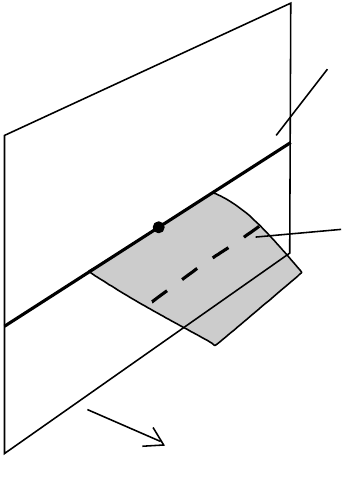_t}}
\caption{The sliding region.}
\label{double-reg-fig}
\end{center}
\end{figure}

The set of sliding points for $\vF^\reg$ is a relatively open subset of $\Sigma$, which we denote by 
$\Slide(\vF^\reg)$. 

\begin{Example}
Consider the Sotomayor-Teixeira regularization of the discontinuous foliation described in Example \ref{example-1}. 
 Then, an easy computation with the expression of $X_\eps$ defined in the previous subsection 
 (and taking $|y|$ to be the usual absolute value) gives
$$
X_\eps = \frac{\partial}{\partial x} - \phi\left( \frac{y}{\eps}\right) \frac{\partial}{\partial y}.
$$
Let $t_0 \in (-1,1)$ be the zero of $\phi$ (which is unique by the monotonicity hypothesis on $\phi$).  
Then, the family of one-dimensional manifolds 
$$
S_{\eps} = \{y = t_0 \eps\}
$$
satisfies  the above conditions 1. and 2. locally at each point of $\Sigma$.  Consequently, $\Slide(\vF^\reg) = \Sigma$.
\end{Example}

\begin{Example}
Consider the discontinuous foliation in $\R^2$ defined by the vector field
$$
X = \frac{\partial}{\partial x} + \big( (x+1) + sgn(y)(x-1) \big) \frac{\partial}{\partial y}.
$$
Then, the Sotomayor-Teixeira regularization is defined by the vector field
$$
X_\eps = \frac{\partial}{\partial x} - \big( (x+1) + \phi\left( \frac{y}{\eps}\right) (x-1) \big)\frac{\partial}{\partial y}.
$$

\begin{figure}[htbp]
\begin{center}
{ \input{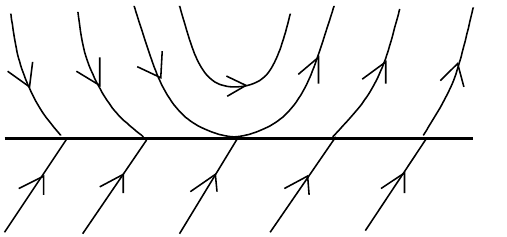_t}}
\caption{Fold point.}
\label{fold-point-fig}
\end{center}
\end{figure}

Therefore for each $x \in \cR$, the coefficient of the $\partial/\partial y$ component of $X_\eps$ vanishes if and only if
$$
y = \eps t_x
$$
where $t_x$ is  a solution of the equation $\phi(t_x) =   (x+1)/(1-x)$. By the assumptions on $\phi$, 
this equation has a solution (which is necessarily unique) if and only if $x \le 0$.  As we shall prove in the next section, it follows that 
$\Slide(\vF^\reg) = \Sigma \cap \{x < 0\}$.
\end{Example}

\begin{Example}
Let us consider the same discontinuous vector field of the previous Example but now use a different regularization.  
Namely, we let $\vF^\reg$ be a regularization of transition type, with transition function $\psi$ 
having a graph as illustrated in the figure below.

\begin{figure}[htbp]
\begin{center}
{ \input{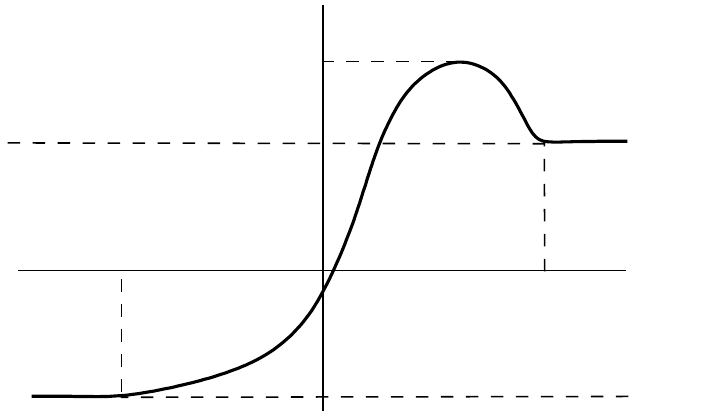_t}}
\caption{A non-monotone transition function.}
\label{double-reg-fig}
\end{center}
\end{figure}

As a consequence of the results of the next section, we have $\Slide(\vF^\reg) = \Sigma \cap \{x < \frac{1}{3} \}$ 
(because $\frac{1/3 + 1}{1-1/3 } = 2$).
\end{Example}
\begin{Remark}
(Stratified Sliding for analytic discontinuity locus)  Assume that the discontinuity locus 
$\Sigma$ is an analytic subset, of dimension $d$.  Then, we can define a more refined notion 
of sliding by considering different strata of $\Sigma$. 

More precisely,  using the decomposition $
\Sigma^0  \subset \Sigma^1 \subset \cdots \subset \Sigma^d = \Sigma
$ defined in (\ref{stratification}), we say that point $p \in \Sigma^k \setminus \Sigma^{k-1}$ is a 
{\em stratified point of sliding }for $\vF^\reg$ if the conditions 1.\ and 2.\ of the above 
definition holds, when we replace the convergence condition in 2.\ by
$$d_H(S_\eps \cap K, \Sigma^k \cap K) \rightarrow 0$$
as $\eps$ goes to zero.   The set of all points $\Sigma^k \setminus \Sigma^{k-1}$ satisfying 
the above condition  is called {\em sliding region of dimension $k$}, and denoted by $\Slide^k(\vF^\reg)$.
\end{Remark}

\begin{Example}
Let us apply the double regularization described in subsection~\ref{subsect-cross} to the 
discontinuous 1-foliation defined in Example \ref{example-3}. An easy computation shows that 
the regularized vector field is given by
$$
X_{\eps,\eta} = - \phi\left(\frac{x}{\eps}\right) \frac{\partial}{\partial x}Ê-  \psi\left(\frac{y}{\eta}\right) \frac{\partial}{\partial y} + \frac{\partial}{\partial z} 
$$
where $\phi,\psi$ are monotone transition functions.  

\begin{figure}[htbp]
\begin{center}
{ \input{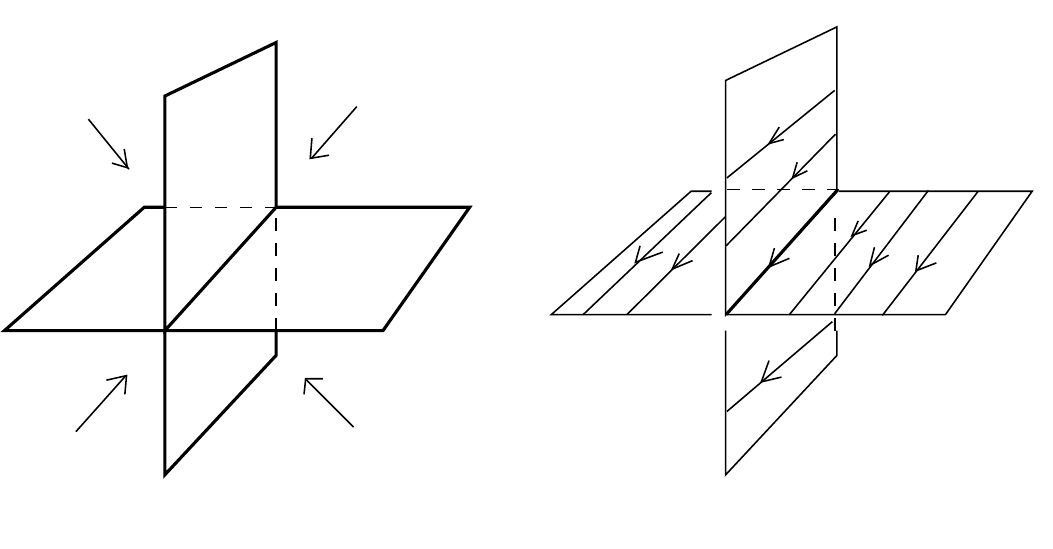_t}}
\caption{A stratified sliding.}
\label{double-reg-slide-fig}
\end{center}
\end{figure}

Consider the one dimensional stratum $\Sigma^1 \subset \Sigma$ given by $\Sigma_1 = \mbox{axis}\, z$.  
We claim that each point of $\Sigma^1$ is a stratified point of sliding.    Indeed, 
if we denote by $t_0,u_0 \in (-1,1)$ the unique roots of $\phi(t) = 0$ and $\psi(u) = 0$ respectively, then the one-dimensional curve
$$
S_{\eps,\eta} = \{x = \eps t_0,\ y = \eta u_0 \}
$$
is invariant by $X_{\eps,\eta}$ and clearly converges to $\Sigma^1$ as $\eps,\eta$ converges to zero.
\end{Example}

\section{Regularizations of transition type: blowing-up and conditions for sliding}\label{s3}
In this section, we consider piecewise  smooth 1-foliations whose discontinuity set 
is a smooth submanifold of codimension one.  
Our main goal is to describe conditions which guarantee that a point belongs to the 
sliding region of given a regularization of transition type.   

To fix the notation, we choose a piecewise smooth 1-foliation $\vF$ defined in a manifold $M$, 
and whose discontinuity locus is a smooth submanifold $\Sigma \subset M$ of codimension one.  
According to the definition in subsection~\ref{subsect-piecewise-smooth}, at 
each point $p \in \Sigma$ 
we can choose local coordinates $(x_1,\ldots,x_{n-1},y)$ and two smooth vector fields $X_+$ and $X_-$ such that 
$\Sigma = \{ y = 0\}$ and $X_+$ and $X_-$ are generators of $\vF$ on the sets $\{y > 0\}$ and $\{y < 0\}$, respectively.  

First of all, we prove a result which will allow us to use the the theory of smooth 
dynamical systems to study such regularization.

\begin{theorem}\label{theorem-smoothableST}
Let $\vF^\reg$ be a regularization of transition type of $\vF$.  Then, $\vF^\reg$ is blow-up smoothable.
\end{theorem}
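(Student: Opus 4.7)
The plan is to perform a single blow-up with centre the smooth codimension-two submanifold $\Sigma\times\{0\}\subset M\times(\R^+,0)$ and to verify that the pull-back of $\vF^\reg$ extends to a smooth $1$-foliation on the blown-up space $N$. Locally near a point $p\in\Sigma$, piecewise smoothness provides coordinates $(x_1,\ldots,x_{n-1},y)$ on a neighbourhood $U$ of $p$ with $\Sigma\cap U=\{y=0\}$ and a local multi-generator $(U,\{X_+,X_-\})$; on $(U\setminus\Sigma)\times(\R^+,0)$ the regularization $\vF^\reg$ is generated, as a vector field tangent to the $M$-fibres of $\pi$, by the $\eps$-dependent field (\ref{def-transitionreg}), whose only discontinuity is concentrated on $\Sigma\times\{0\}$.

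The blow-up of $\{y=0,\eps=0\}$, carried out in the manifold-with-corners sense of \cite{M}, is covered by affine charts of two types. On the ``horizontal'' chart $\Phi_H(x,\bar y,\bar\eps)=(x,\bar\eps\bar y,\bar\eps)$, with $\bar y\in\R$ and $\bar\eps\ge 0$, the ratio $y/\eps=\bar y$ is a global smooth coordinate, so $\psi(x,y/\eps)=\psi(x,\bar y)$ is globally smooth. The remaining parts of the exceptional divisor sit inside two ``vertical'' charts $\Phi_V^{\pm}(x,u,v)=(x,\pm u,uv)$, with $u,v\ge 0$, on which $y/\eps=\pm 1/v$ has absolute value at least $1$ in a neighbourhood of the exceptional divisor $\{u=0\}$ (away from the overlap with $\Phi_H$), forcing $\psi(x,y/\eps)\equiv\pm 1$ there.

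In each chart the pull-back of the $\pi$-vertical field $(X_\eps,0)$ acquires a simple pole along the exceptional divisor, due to the fibres of $\pi$ failing to be straight in the new coordinates. Mimicking the multiplication-by-$y_1^{m_\mini}$ step of the proof of Theorem~\ref{theorem-singularlocus}, I clear this pole by multiplying the local generator by the coordinate defining the exceptional divisor ($\bar\eps$ in $\Phi_H$, $u$ in $\Phi_V^\pm$). In $\Phi_H$, for instance, the pull-back takes the form $\widetilde A\,(\partial/\partial x)+\bar\eps^{-1}\widetilde B\,(\partial/\partial\bar y)$ with $\widetilde A,\widetilde B$ smooth, and after multiplication by $\bar\eps$ becomes the smooth generator
\[
Y \;=\; \bar\eps\,\widetilde A\,\frac{\partial}{\partial x}\;+\;\widetilde B\,\frac{\partial}{\partial \bar y},
\]
whose restriction to $\{\bar\eps=0\}$ is $\widetilde B(x,\bar y,0)\,(\partial/\partial\bar y)$, i.e.\ the fast-system component appearing in (\ref{sp-problem}).

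Independence of the choice of multi-generator then follows, as in the proof of Theorem~\ref{theorem-singularlocus}, from the compatibility condition (\ref{local-compatibilityofgen}): a second multi-generator $(U,\{Y_+,Y_-\})$ differs from $(U,\{X_+,X_-\})$ by a \emph{single} smooth positive factor $\varphi$ on $U$, whence $Y_\eps=\varphi X_\eps$, and the cleared generators in each chart differ only by $\varphi$ composed with the chart map. The local smooth generators therefore patch into a global smooth $1$-foliation $\widetilde{\vF^\reg}$ on $N$, related to $\vF^\reg$ outside the exceptional divisor by the blow-up map, which gives blow-up smoothability. The main obstacle I expect is the manifold-with-corners bookkeeping along the overlap of the three charts together with a verification that the cleared vector fields are non-flat at every point of the exceptional divisor; the latter follows from the non-flatness of $X_\pm$ and the explicit form of the coefficients $\widetilde A,\widetilde B$.
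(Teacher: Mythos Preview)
Your proposal is correct and follows essentially the same route as the paper: a single blow-up of $\Sigma\times\{0\}$, the same three directional charts (your $\Phi_H$ and $\Phi_V^{\pm}$ are the paper's $E$ and $F_\pm$ charts), the observation that $\psi(x,y/\eps)$ becomes either the smooth function $\psi(x,\bar y)$ or the constant $\pm 1$, and clearing a first-order pole by multiplying by the divisor coordinate. Your explicit remarks on independence of the multi-generator and on non-flatness go slightly beyond what the paper spells out here, but they are routine and do not constitute a different method.
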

\begin{proof}
We will show that a single blowing-up suffices to obtain a smooth foliation.  
More precisely, consider the blowing up 
$$\Phi: N \rightarrow M \times (\R^+,0)$$
with center on $\Sigma$.  We claim that there exists a smooth foliation in $\vG$ 
in $N$ which is related to $\vF^\reg$ by $\Phi$. 

\begin{figure}[htbp]
\begin{center}
{ \input{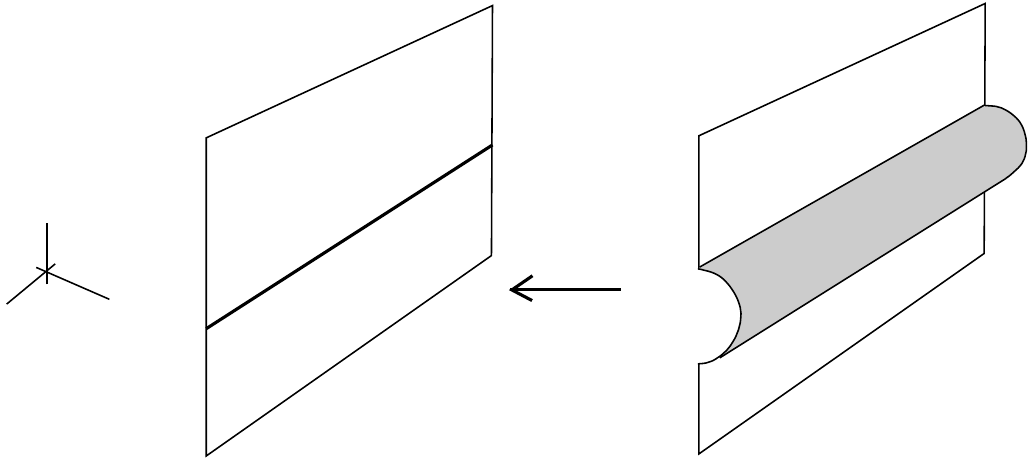_t}}
\caption{Blowing-up the regularization.}
\label{double-reg-slide-fig}
\end{center}
\end{figure}

To prove this,  we use the trivialization
of $\Sigma$ given by the local charts $(x,y) \in V \times \R$ described in subsection \ref{subsection-ST} 
and the expression for $X_\eps$ defined at the end of that subsection.  Using these coordinates, 
the blowing-up map can be written (up to restriction to an appropriate subdomain) as
$$
\begin{array}{ccc}
V \times \cS^1 \times \cR^+ & \longrightarrow & V \times \R \times \R \\
x,(\theta,r) & \longmapsto & x,y=r \sin(\theta), \eps = r \cos(\theta)
\end{array}.
$$
In order to make the computations easier, it is better to cover the domain in $\cS^1 \times \cR^+$ 
by three directional charts, with domains 
$E = \{\theta \not\equiv 0 (\mathrm{mod}\, \pi \Z)\}$ and $F_\pm = \{\theta \not\equiv \pm \pi/2 (\mathrm{mod}\, 2\pi \Z)\}$.  
In these charts, the blowing-up map assumes 
respectively the form
$$
\begin{array}{c}
\begin{array}{ccc}
V \times \R \times \R^+ & \longrightarrow & V \times \R \times \R \\
x,(\bar{y},\bar{\eps}) & \longmapsto & x,y= \bar{\eps}\bar{y}, \eps = \bar{\eps}
\end{array} ,\\ \\
\begin{array}{ccc}
V \times \R^+ \times \R^+ & \longrightarrow & V \times \R \times \R \\
x,(\tilde{y},\tilde{\eps}) & \longmapsto & x,y=\pm \tilde{y}, \eps =  \tilde{y} \tilde{\eps}
\end{array}.
\end{array}$$
Let us compute the pull-back of $X_\eps$ in each one of these charts.  

In the $E$-chart, we have the following transformation rules for the basis vectors of $T(M\times \R)$:
$$
\frac{\partial}{\partial x_i} = \frac{\partial}{\partial x_i},Ê\quad
\frac{\partial}{\partial y} = \frac{1}{\bar{\eps}} \frac{\partial}{ \partial \bar{y}} ,\quad
\frac{\partial}{\partial \eps} = \frac{\partial}{ \partial \bar{\eps}}  - \frac{\bar{y}}{\bar{\eps}} \frac{\partial}{\partial \bar{y}} .
$$
Thus, for instance the vector field $f(x,y,\eps)\frac{\partial}{\partial y}$ is mapped to 
$\frac{f(x,\bar{y}\bar{\eps},\bar{\eps})}{\bar{\eps}} \frac{\partial}{\partial \bar{y}}$ and son on.
Therefore, using the expression in (\ref{def-transitionreg}), we obtain
$$
\Phi^* X_\eps = \frac{1}{2}\left(1 +  \psi\big(x,\bar{y}\big)\; \right) \Phi^* X_+ + \frac{1}{2}\left(1 - \psi\big(x,\bar{y}\big)\; \right) \Phi^* X_-.
$$
Therefore, from the above transformation rules, it is clear that the vector field
\begin{equation}\label{vector-fieldY}
Y \stackrel{def}{=} \bar{\eps}\, \Phi^* X_\eps
\end{equation}
has a smooth extension to the exceptional divisor $\{\bar{\eps} = 0\}$.  
We take $Y$ to be a local generator of $\vG$ on this domain.

Similarly, in the  $F_\pm$-chart, we have the following transformation rules 
$$
\frac{\partial}{\partial x_i} = \frac{\partial}{\partial x_i},Ê\quad
\frac{\partial}{\partial \eps} = \pm \frac{1}{\tilde{y}} \frac{\partial}{ \partial \tilde{\eps}} ,\quad
\frac{\partial}{\partial y} = \pm \left(  \frac{\partial}{ \partial \tilde{y}}  - \frac{\tilde{\eps}}{\tilde{y}} \frac{\partial}{\partial \tilde{\eps}} \right).
$$
And the pull-back of $X_\eps$ in this chart has the form
$$
\Phi^* X_\eps = \frac{1}{2}\left(1 +  \psi\Big(x,\pm \frac{1}{\tilde{\eps}}\Big)\; \right) \Phi^* X_+ + \frac{1}{2}\left(1 - \psi\Big(x,\pm \frac{1}{\tilde{\eps}}\Big)\; \right) \Phi^* X_-.
$$
Notice that, for all $0 < \tilde{\eps} \le 1$, one has $\psi\Big(x,\pm \frac{1}{\tilde{\eps}}\Big) \equiv \pm 1$
 identically, and we can extend this function smoothly to $\tilde{\eps} = 0$ as being equal to $\pm 1$, according to the domain. 
  Therefore,  similarly to the  previous case, the vector field 
$$
Z \stackrel{def}{=} \tilde{y}\, \Phi^* X_\eps
$$
has a smooth extension to the exceptional divisor $\{\tilde{y} = 0\}$, and we choose it as a generator of $\vG$ 
on the corresponding domain.  This concludes the proof.
\end{proof}

Now, we will study the sliding regions.  The criterion that we are going to describe needs one additional definition: 
 Using the notation introduced above, the {\em height function of $\vF^\reg$ }is the smooth function $h^\reg$ 
 with domain $(x,t) \in \Sigma\times \R$ defined by
$$
h^\reg = 
 \psi\, \mathcal{L}_{(X_+ - X_-)}(y) + {\mathcal{L}_{(X_+ + X_-)}(y)}
$$
where $\psi(x,t)$ is the transition function and $\mathcal{L}_X(f)$ denotes the Lie derivative of a function $f$ 
with respect to a vector field $X$. We remark that that the Lie derivative of $X_+ - X_-$ and $X_+ + X_-$ 
needs to be evaluated only at points 
of $\Sigma$.  

More explicitly, if we write $X_+$ and $X_-$ in terms of the local trivializing coordinates $(x,y)$ described 
above as  
\begin{equation}\label{expressions-Xpm}
X_\pm  = a_\pm \frac{\partial}{\partial y}Ê+ \sum_{i=1}^{n-1} b_{i,\pm} \frac{\partial}{\partial x_i}
\end{equation}
(for some smooth functions $a_\pm$ and $b_{i,\pm}$) then the height function is given by
$$
h^\reg(x,t) =  \psi(x,t)\, \Big(a_+(x,0) - a_-(x,0)\Big) +  \Big(a_+(x,0) + a_-(x,0)\Big).
$$

Notice that the function $h^\reg$ is independent of the choice of local coordinates $(x,y)$ and 
local generators $(X_+,X_-)$ up to multiplication by a strictly positive function.  More precisely,  
if we replace the local generators $(X_+,X_-)$ by local generators $(Y_+,Y_-)$ such 
that $Y_\pm = \varphi X_\pm$ then $h^\reg$ is transformed to
$\varphi h^\reg$.

Based on the height function, we define the following subsets 
in $\Sigma \times \R$:
$$
\begin{array}{rcl}
\displaystyle \vZ^\reg &=& \big\{  h^\reg(x,t) = 0 \big\},\quad 
\vW^\reg = \Big\{  \displaystyle \frac{\partial h^\reg}{\partial t}(x,t) \ne 0\Big\},\;\text{Êand }\\
\NH^\reg &=& \vZ^\reg \cap \vW^\reg.
\end{array}
$$
The main result of this section can now be stated as follows:

\begin{theorem}\label{theorem-slide}
Let $\vF^\reg$ be a regularization of transition type of $\vF$, defined by a transition function $\psi$ 
as above.  Then, 
$$
\pi(\NH^\reg) \subset \Slide(\vF^\reg) \subset \pi(\vZ^\reg).
$$
where $\pi:\Sigma \times \R \rightarrow \Sigma$ is the canonical projection.
\end{theorem}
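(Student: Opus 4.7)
The plan is to exploit the blow-up $\Phi\colon N \to M\times(\R^+,0)$ constructed in the proof of Theorem \ref{theorem-smoothableST} so as to convert $\vF^\reg$ into a smooth slow-fast problem amenable to Fenichel's theory. I would first redo, in the $E$-chart coordinates $(x,\bar y,\bar\eps)$, the computation of $Y=\bar\eps\,\Phi^* X_\eps$: using the local expressions \eqref{expressions-Xpm} for $X_\pm$ together with the explicit form \eqref{def-transitionreg}, one immediately obtains
\begin{equation*}
Y\big|_{\bar\eps=0} \;=\; \tfrac12\, h^\reg(x,\bar y)\,\frac{\partial}{\partial\bar y}.
\end{equation*}
Thus $\vZ^\reg$ is precisely the critical manifold of $Y$ viewed as a singular perturbation problem with fast variable $\bar y$ and slow variables $x$, while the condition $(x,t)\in \vW^\reg$ is exactly the normal hyperbolicity of $\vZ^\reg$ at $(x,t,0)$.

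For the inclusion $\pi(\NH^\reg)\subset \Slide(\vF^\reg)$, take $(p,t_0)\in\NH^\reg$. Since $\partial h^\reg/\partial t(p,t_0)\ne 0$, the implicit function theorem expresses $\vZ^\reg$ locally as a graph $\bar y=\tau(x)$, and normal hyperbolicity lets me invoke Fenichel's persistence theorem to produce, for each sufficiently small $\bar\eps>0$, a smooth locally invariant graph $M_{\bar\eps}=\{\bar y=\tau_{\bar\eps}(x)\}$ of $Y$ with $\tau_{\bar\eps}\to\tau$ uniformly. Blowing down the fiber $\{\bar\eps=\eps\}$ yields $S_\eps = \Phi(M_\eps) = \{y=\eps\,\tau_\eps(x)\}$, which is invariant under $X_\eps$; since $\tau_\eps$ remains bounded near $p$, the family $S_\eps$ converges to $\Sigma$ in Hausdorff distance on each compact subset of a neighborhood of $p$, so $p\in\Slide(\vF^\reg)$.

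For the inclusion $\Slide(\vF^\reg)\subset\pi(\vZ^\reg)$, I argue by contrapositive. Suppose $h^\reg(p,t)\ne 0$ for all $t\in\R$. Because $\psi(x,t)\equiv\pm 1$ whenever $|t|\ge 1$, one has $h^\reg(p,t)=2a_+(p,0)$ for $t\ge 1$ and $h^\reg(p,t)=2a_-(p,0)$ for $t\le -1$; hence both $a_\pm(p,0)$ are nonzero and of the same sign, and continuity of $h^\reg(p,\cdot)$ on $[-1,1]$ yields a uniform lower bound $|h^\reg(p,t)|\ge c_1>0$. Continuity in $x$ propagates this bound to $V\times\R$ for some neighborhood $V$ of $p$ in $\Sigma$. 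Inserting this into the convex-combination formula defining $X_\eps$, one finds that the $y$-component of $X_\eps$ is bounded below by $c_1/2$ on a neighborhood of $p$ in $M$, uniformly in $\eps>0$; equivalently, $X_\eps$ is uniformly transverse to $\Sigma$ near $p$.

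To conclude, fix a compact box $K=\{|x-p|\le R,\ |y|\le\delta_0\}$ inside this neighborhood and assume, for contradiction, the existence of invariant manifolds $S_\eps\subset U$ with $S_\eps\cap K \to \Sigma\cap K$ in Hausdorff metric. For any $q^*\in S_\eps\cap K$ sufficiently close to $p$, the trajectory of $X_\eps$ through $q^*$ remains in $S_\eps$ as long as it stays in $K$; the bound $\dot y \ge c_1/2$ combined with a uniform bound on $|\dot x|$ forces $y$ to attain values comparable to $\min(R,\delta_0)$ in a fixed time before the trajectory exits $K$, contradicting the fact that Hausdorff convergence makes $|y|$ arbitrarily small throughout $S_\eps\cap K$ for small $\eps$. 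The main obstacle I anticipate is exactly this last step: Hausdorff convergence provides no $C^1$ control on the tangent spaces of $S_\eps$, so one cannot simply compare $TS_\eps$ with $T\Sigma$; the contradiction must instead be obtained by flowing base points along $X_\eps$ and carefully comparing elapsed time with the Hausdorff distance.
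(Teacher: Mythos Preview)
Your argument for the inclusion $\pi(\NH^\reg)\subset\Slide(\vF^\reg)$ is essentially identical to the paper's: blow up in the $E$-chart, identify $Y|_{\bar\eps=0}=\tfrac12 h^\reg\,\partial_{\bar y}$, apply the implicit function theorem and Fenichel persistence, and blow down.

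For the second inclusion your route genuinely differs. The paper does not argue directly against the existence of a convergent family $S_\eps$; instead it first proves the separate Theorem~\ref{theorem-sewingreg}, namely $\pi(\vZ^\reg)^\complement\subset\Sew(\vF^\reg)$, by showing (via the blow-up in \emph{both} the $E$- and $F_\pm$-charts) that $g=\mathcal{L}_{X_\eps}(y)$ satisfies $|g|>\mu$ on a full neighborhood of $p$, then invokes the flow-box theorem so that $\vF^\reg_\eps$ is locally generated by $\partial/\partial y$. The inclusion $\Slide(\vF^\reg)\subset\pi(\vZ^\reg)$ then follows from the disjointness $\Slide\cap\Sew=\emptyset$ (Remark~\ref{intersection-slidesew}). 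You instead obtain the same uniform lower bound on the $y$-component of $X_\eps$ by a direct estimate (exploiting $|\psi|\le 1$ and continuity of $a_\pm$ in $y$), and then run a direct escape argument: flow a point of $S_\eps\cap K$ and produce a point of $S_\eps\cap K$ with $|y|$ bounded below independently of $\eps$, contradicting Hausdorff convergence to $\{y=0\}$. This is correct and more self-contained for the statement at hand; what you lose relative to the paper is the extra conclusion that points of $\pi(\vZ^\reg)^\complement$ are actually sewing points, which the paper records as an independent theorem.

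One small point worth making explicit in your write-up: when you propagate the bound $|h^\reg(p,\cdot)|\ge c_1$ to $|h^\reg(x,\cdot)|\ge c_1/2$ on $V\times\R$, the noncompactness of $\R$ is harmless precisely because $h^\reg(x,t)$ is independent of $t$ for $|t|\ge 1$; and when you pass from $h^\reg$ to the actual $y$-component of $X_\eps$, you are replacing $a_\pm(x,0)$ by $a_\pm(x,y)$, which costs only $O(|y|)$ uniformly in $\eps$ since $|\psi|\le 1$. With those two observations spelled out, the anticipated ``obstacle'' you mention disappears: the escape argument needs only the uniform bound $\dot y\ge c>0$ on $K$ together with a bound on $|\dot x|$, and no control whatsoever on $TS_\eps$.
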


\begin{proof}
Let us compute in more details the expression of the blowing-up of $X_\eps$ in the $E$ chart described in 
the proof of Theorem~\ref{theorem-smoothableST}.  If we write the transformed vector field $Y$ in the form 
\begin{equation}\label{vectorfield-Y2}
Y = \alpha\  \frac{\partial}{\partial \bar{y}}Ê+ \bar{\eps}\; \sum_{i=1}^{n-1} \beta_{i}\  \frac{\partial}{\partial x_i} 
\end{equation}
then, using the expansions of $X_+$ and $X_-$ given in (\ref{expressions-Xpm}), we conclude that, for $i = 1,...,n-1$,
$$
 \beta_i = \displaystyle \frac{1}{2}  \Big(  \psi\, (b_{i,+} - b_{i,-}) +  (b_{i,+} + b_{i,-}) \Big),
$$
and 
$$
\alpha = \displaystyle \frac{1}{2} \Big(  \psi\, (a_+ - a_-) +  (a_+ + a_-) \Big)
$$ 
where $\psi$ is the transition function evaluated at $(x,t) = (x,\bar{y})$ and all functions $a_\pm$ and $b_{i,\pm}$ 
are computed by replacing the variable $y$ by $\bar{\eps} \bar{y} $.  Notice that the restriction 
$Y|_D$ of $Y$ to the exceptional divisor $D = \{\bar{\eps} = 0\}$ is simply given by
\begin{equation}\label{YonD}
Y|_D = \frac{1}{2} h^\reg(x,\bar{y})\  \frac{\partial}{\partial \bar{y}}Ê
\end{equation}
where $h^\reg$ is the height function defined above.

Suppose now that the coordinates $(x,y,\eps)$ are centered in a point $p \in \Sigma \times \{0\}$ 
lying in $\pi(\NH^\reg)$.  Then, 
it follows from the above definition of the sets $\vZ^\reg$ and $\vW^\reg$ that
there exists a $\bar{y}_0 \in \cR$ lying in the open interval $(-1,1)$ such that 
$$ 
h^\reg(0,\bar{y}_0) = 0, \quad\text{Êand }\quad \frac{\partial h^\reg}{\partial y}(0,\bar{y}_0) \ne 0.
$$
Looking at the expression of $Y|_D$ given above, it follows from the implicit function theorem that the point 
$q = (0,\bar{y}_0,0) \in \Phi^{-1}(p)$ lies in a locally defined smooth codimension one submanifold $H_0$ 
contained in the divisor $D = \{\bar{\eps} = 0\}$ such that 
\begin{enumerate} 
\item Each point of $H_0$ is an equilibrium point of $Y|_D$.
\item $H_0$ is a normally hyperbolic invariant submanifold of $Y|_D$. 
\end{enumerate}
From  Fenichel  theory \cite{F} it follows that there exists a local smooth manifold $W \subset N$ of 
codimension one defined near $p$ which is invariant by the flow of $Y$ and such that $W\cap D = H_0$.  

Let $S = \Phi(W)$.  Then, it follows that, for each sufficiently small $\eps > 0$, the set 
$S_\eps = S \cap \pi^{-1}(\eps)$ is an invariant submanifold of $\vF^\reg_\eps$ and $S_\eps$ 
accumulates on $\Phi(H_0)$ as $\eps$ goes to zero.  Therefore, 
$p \in \Slide(\vF^\reg)$.

\begin{figure}[htbp]
\begin{center}
{ \input{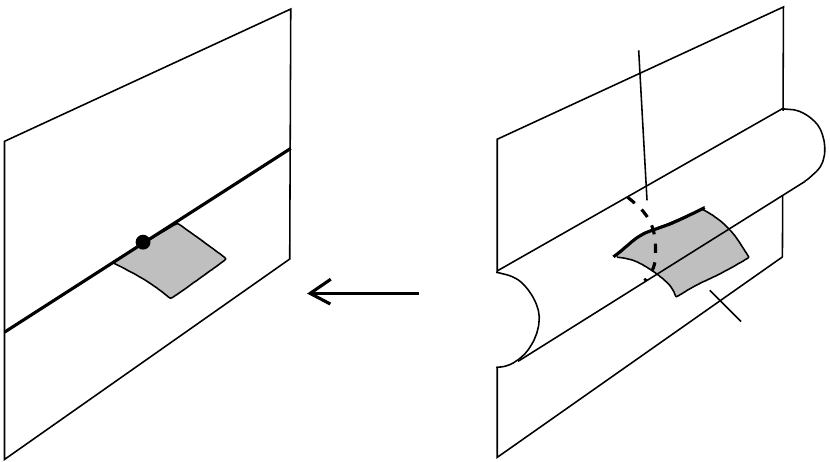_t}}
\caption{The normally hyperbolic manifold $H_0$.}
\label{double-reg-slide-fig}
\end{center}
\end{figure}

We have just proved that $\pi(\NH^\reg) \subset \Slide(\vF^\reg)$.  We postpone the proof of the inclusion 
$\Slide(\vF^\reg) \subset \pi(\vZ^\reg)$ to the end of this section.
\end{proof}
Let us now describe the behavior of a regularization in the complement of the sliding set.  
For this, we introduce the so-called sewing region.  

Keeping the above notation, we will say that a point $p \in \Sigma$ is a \emph{point of sewing }for the regularization 
$\vF^\reg$ if there exists an open neighborhood $U \subset M$ of $p$ and local coordinates $(x,y)$ 
defined in $U$ such that 
\begin{enumeratenumeric}
\item $\Sigma = \{y = 0\}$ and,
\item For each sufficiently small $\eps > 0$, the \emph{vertical vector field } $\frac{\partial}{\partial y}$ 
is a generator of $\vF_\eps^\reg$ in $U$.
\end{enumeratenumeric}
We will denote the set of all sewing points by $\Sew(\vF^\reg)$.
\begin{Remark}\label{intersection-slidesew}
Notice that the intersection of the regions $\Sew(\vF^\reg)$ and $\Slide(\vF^\reg)$ is empty. Indeed, 
if $p$ lies in $\Sew(\vF^\reg)$ then in the coordinates $(x,y)$ described above, each smooth 
manifold $S_\eps$ which is invariant by $\vF_\eps^\reg$ should  have necessarily the form
$$S_\eps = \{f_\eps(x) = 0\}$$
for some smooth function $f_\eps$ which is independent of the $y$ variable. In particular, 
$S_\eps$ cannot tend to the discontinuity locus $\Sigma = \{y = 0\}$ as $\eps$ goes to zero.  
Therefore $p \not\in \Slide(\vF^\reg)$. 
\end{Remark}

\begin{figure}[htbp]
\begin{center}
{ \input{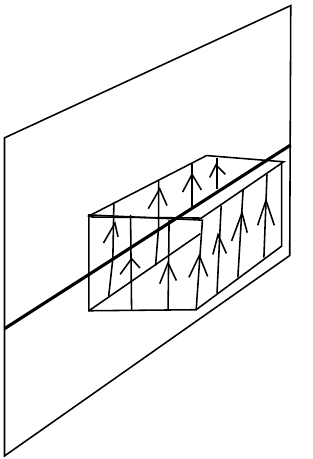_t}}
\caption{A sewing region.}
\label{sewing-region-fig}
\end{center}
\end{figure}

\begin{theorem}\label{theorem-sewingreg}
Let $\vF^\reg$ be a regularization of transition type of $\vF$, defined by a transition function $\psi$.  Then, 
$$
\pi\big(\vZ^\reg \big)^\complement \subset \Sew(\vF^\reg)
$$
where $\pi(\vZ^\reg)^\complement$ denotes the complement of $\pi(\vZ^\reg)$ in $\Sigma$.
\end{theorem}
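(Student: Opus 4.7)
Suppose $p \in \pi(\vZ^\reg)^\complement$, which unfolds to $h^\reg(p, t) \ne 0$ for all $t \in \R$. Since the transition function satisfies $\psi(x, t) = \pm 1$ for $\pm t \ge 1$, the height function reduces to $h^\reg(p, t) = 2 a_+(p, 0)$ when $t \ge 1$ and to $h^\reg(p, t) = 2 a_-(p, 0)$ when $t \le -1$. Hence both transverse components $a_\pm(p, 0)$ are nonzero, and the intermediate value theorem applied to the continuous function $t \mapsto h^\reg(p, t)$ forces them to have a common sign; without loss of generality, both are positive.

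The core estimate I would establish next is that the $y$-component $\alpha(x, y, \eps)$ of the regularized vector field $X_\eps$, whose explicit form is exhibited in the proof of Theorem \ref{theorem-slide}, is uniformly bounded below by a positive constant $c$ on some product $U \times (-\delta, \delta)$ for all sufficiently small $\eps > 0$. Outside the transition strip $\{|y| \le \eps\}$ this is immediate: there $\alpha$ equals $a_+$ or $a_-$, both positive near $p$ by continuity. Inside the strip, substituting $t = y/\eps$ and Taylor expanding $a_\pm(x, y) = a_\pm(x, 0) + O(\eps)$ gives $\alpha(x, y, \eps) = \tfrac{1}{2} h^\reg(x, y/\eps) + O(\eps)$; uniform positivity of $h^\reg$ on the compact set $\overline{U} \times [-1, 1]$, a consequence of continuity together with $h^\reg(p, \cdot) > 0$, then supplies the required constant.

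With uniform transversality in hand, $X_\eps$ is nowhere zero and everywhere transverse to $\Sigma$ on $U \times (-\delta, \delta)$, so the smooth flow-box theorem produces a local chart in which $X_\eps$ becomes a strictly positive multiple of $\partial/\partial y$. This certifies $\partial/\partial y$ as a generator of $\vF^\reg_\eps$ and thus realizes $p$ as a sewing point. The main obstacle is that the rectifying chart a priori depends on $\eps$, whereas the definition of $\Sew(\vF^\reg)$ asks for a single coordinate system valid for all small $\eps$. I would resolve this by passing to the blow-up of Theorem \ref{theorem-smoothableST}: in the $E$-chart the smooth field $Y = \bar{\eps}\, \Phi^* X_\eps$ satisfies $Y|_D = \tfrac{1}{2} h^\reg(x, \bar y)\,\partial/\partial \bar y$, which is nonvanishing along the compact arc $\Phi^{-1}(p) \cap D$ by hypothesis. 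A single flow-box chart for $Y$ in a neighborhood of this arc, pushed forward via $\Phi$, yields a smooth family of downstairs charts depending continuously on $\eps$ up to $\eps = 0$, thereby producing the uniform chart demanded by the definition of $\Sew(\vF^\reg)$.
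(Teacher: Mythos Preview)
Your proof is correct and takes essentially the same route as the paper. Both arguments establish that $g := \mathcal{L}_{X_\eps}(y)$ satisfies a uniform lower bound $|g| > \mu$ on a neighborhood of $p$ for all small $\eps > 0$, and then invoke the flow-box theorem. The paper obtains this bound by pulling $g$ back through the blow-up $\Phi$ via the identity $(\mathcal{L}_X f)\circ\Phi = \mathcal{L}_{\Phi^* X}(f\circ\Phi)$ and computing $g\circ\Phi$ separately in the $E$-chart (where it restricts to $\tfrac{1}{2}h^\reg(x,\bar y)$ on the divisor) and in the $F_\pm$-charts (where it restricts to $\pm a_\pm(x,0)$); your inside-strip Taylor expansion $\alpha = \tfrac{1}{2}h^\reg(x,y/\eps) + O(\eps)$ and your outside-strip continuity argument for $a_\pm$ are exactly these two computations, just carried out directly in the original $(x,y,\eps)$ coordinates rather than after passing to blow-up charts. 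The paper then declares the result ``an immediate consequence of the flow-box theorem'' without further discussion of the $\eps$-uniformity of the rectifying chart; your final paragraph is more scrupulous on this point than the paper itself, and your proposed resolution via a single flow-box for $Y$ in the blow-up is a reasonable way to make that step precise.
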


\begin{proof}
Suppose that the coordinates $(x,y,\eps)$ described in the proof of Theorem~\ref{theorem-smoothableST} 
are centered in a point $p \in \Sigma\times \{0\}$ 
which lies in $\pi\big(\vZ^\reg \big)^\complement$.  We claim that there exists a constant $\mu > 0$ 
and an open neighborhood 
$U \subset M \times (\cR^+,0)$ of $p$ such that the function 
$$g = \mathcal{L}_{X_\eps}(y)$$
(which is defined only for $\eps > 0$) satisfies $|g| > \mu$ on $U \cap \{\eps > 0\}$.  
Once we prove this claim, the result is an immediate consequence of the flow-box theorem.

To prove this, we use the following fact:  If $f$ and $X$ are respectively a smooth function 
and vector field defined in an open set $V \subset \cR^n$ and $\Psi: W \mapsto V$ is a 
diffeomorphism from another open set $W$ into $V$,  then
$$
\big(\mathcal{L}_X(f)\big) \circ \Psi = \mathcal{L}_{\Psi^* X} \big( f\circ \Psi \big).
$$
In other words, the Lie derivative operation commutes with the pull-back operation.  

We apply this to the vector field $X_\eps$ and to blowing-up map $\Phi$ defined in the proof of the 
Theorem~\ref{theorem-smoothableST}. Recall that $\Phi$ is a diffeomorphism outside the 
exceptional divisor $D$, and therefore by the above identity,
$$
g\circ \Phi = \mathcal{L}_{\Phi^* X_\eps}\big( y \circ \Phi \big).
$$
We are going to compute this expression explicitly using the directional charts.  
Recall that, in $E$ chart, we have
$$
\Phi^* X_\eps = \frac{1}{\bar{\eps}} Y\quad \text{Êand }\quad y \circ \Phi = \bar{\eps}\bar{y}
$$
where $Y$ is the vector field in (\ref {vector-fieldY}).  Using the basic properties of the Lie derivative, we get
$$
\begin{array}{rcll}
g\circ \Phi &=& \displaystyle \mathcal{L}_{\frac{1}{\bar{\eps}}Y}\big( \bar{\eps}\bar{y} \big) \\ \vspace{0.2cm}
& = & \displaystyle \frac{1}{\bar{\eps}}\, \mathcal{L}_{Y}\big( \bar{\eps}\bar{y}\big) & \text{(because $\mathcal{L}_{fX}(g) = f\mathcal{L}_{X}(g)$)}\\ \vspace{0.2cm}
& = & \displaystyle\frac{1}{\bar{\eps}}\, \left( \bar{\eps}\mathcal{L}_{Y}\big( \bar{y}\big) + \bar{y}\mathcal{L}_{Y}\big( \bar{\eps}\big) \right) & \text{(by Leibniz's rule)}\\
& = & \displaystyle\mathcal{L}_{Y}\big( \bar{y}\big)& \text{(because $\mathcal{L}_{Y}(\bar{\eps}) = 0$).}\\
\end{array} 
$$
Now, using the expression of $Y$ given in (\ref{vectorfield-Y2}), we obtain 
$$\mathcal{L}_{Y}\big( \bar{y}\big) =  \frac{1}{2}\left(\psi(x,\bar{y})\, \big(a_+ - a_-\big) +  a_+ + a_-\right).$$  
where $a_+$ and $a_-$ are computed by replacing $x,y$ by $x,\bar{\eps}\bar{y}$, respectively.  
By restricting this expression to the divisor $D = \{\bar{\eps} = 0\}$ and using the expression 
(\ref{YonD}), we easily to see that $|g\circ \Phi| > \mu$ for some $\mu > 0$, uniformly in 
sufficiently small neighborhood $U_1$ of $\Phi^{-1}(p)$ in the domain of the $E$-chart.

\begin{figure}[htbp]
\begin{center}
{ \input{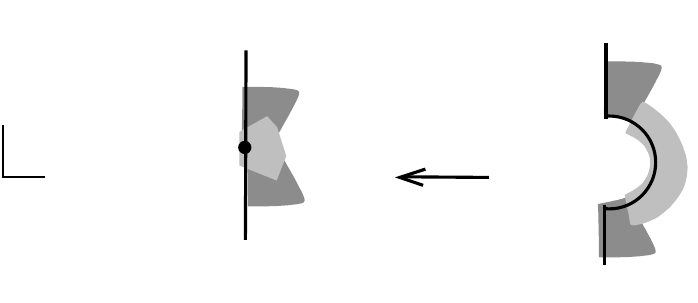_t}}
\caption{The regions $U_1$ and $U_2$.}
\label{flowbox-fig}
\end{center}
\end{figure}

Let us now compute $g\circ \Phi$ in the $F_\pm$ chart.  Analogous computations  gives 
$$
g\circ \Phi = \frac{1}{\tilde{y}} \mathcal{L}_{Z}\big( \tilde{y} \big) .
$$
On the other hand, a simple application of the transformation rules described in the proof of 
Theorem~\ref{theorem-smoothableST} shows that
$$
\mathcal{L}_{Z}(\tilde{y}) = \pm \frac{\tilde{y}}{2}\Big( \psi(x,\frac{1}{\tilde{\eps}})\, \big(a_+ - a_-\big) +  a_+ + a_-\Big)
$$
where $a_+$ and $a_-$ are now computed by replacing $x,y$ by $x,\pm\tilde{y}$, respectively.
Again, by restricting this expression to the divisor $D = \{\tilde{y} = 0\}$ it is easy to see that 
$|g\circ \Phi| > \mu$ uniformly in a sufficiently small neighborhood $U_2$ of $\Phi^{-1}(p)$ in the domain of the $F_\pm$-chart.  

Since the domains of the $E$ and $F_\pm$ charts covers an entire neighborhood of $\Phi^{-1}(p)$ 
in the blowed-up space, it follows that $U_1 \, \cup\,  U_2$ is a neighborhood of $\Phi^{-1}(p)$
 in which  $|g \circ \Phi|Ê> \mu$ .  Hence the inequality $|g|>\mu$ holds in the neighborhood 
 $U = \Phi(U_1 \cup U_2)$ of $p$.  This concludes the proof of Theorem \ref{theorem-sewingreg}.
\end{proof}

We are now ready to conclude the proof of Theorem \ref{theorem-slide}.

\begin{proof}{(end of proof of Theorem \ref{theorem-slide})}
It remains to prove that $\Slide(\vF^\reg) \subset \pi(\vZ^\reg)$.  From the Remark \ref{intersection-slidesew}, 
we know that $\Slide(\vF^\reg) \cap \Sew(\vF^\reg) = \emptyset$.  Combining with the result of the 
above Theorem, we conclude that $\Slide(\vF^\reg) \cap \pi(\vZ^\reg)^\complement = \emptyset$.  
This concludes the proof.
\end{proof}

\begin{Remark}
Recall that in the case of the Sotomayor-Teixeira regularization we require the transition function 
$\psi$ to be strictly monotone in the interval $(-1,1)$.  In this case, the set $\pi(\vZ^\reg)$ can be 
alternatively described by the condition
$$
a_+ \cdot a_- \le 0
$$
In other words, we recover the usual sliding condition of Filippov.  Correspondingly, in this case 
$\pi(\vZ^\reg)^\complement$ is defined by
$$
a_+ \cdot a_- > 0
$$
which corresponds to the Fillipov's sewing condition.
\end{Remark}
\begin{Remark}
Notice that in the limit dynamics defined in the sliding region can be highly dependent on the choice of 
the transition function used in the regularization. To illustrate this, consider the following simple example.  
Let $\vF$ be the discontinuous 1-foliation in $\R^2$ defined by the vector field
$$
X = \big( x^2 + y \big) \frac{\partial}{\partial x} - \sgn(y) \frac{\partial}{\partial y}
$$
with discontinuity locus $\Sigma = \{y = 0\}$.  Given a monotone transition function $\phi$, the 
Sotomayor-Teixeira regularization $\vF^\reg$ is defined by the vector field
$$
X_\eps = \big( x^2 + y \big) \frac{\partial}{\partial x} - \phi\left( \frac{y}{\eps} \right) \frac{\partial}{\partial y}
$$
and it is easy to see that the sliding region coincides with $\Sigma$.  Explicitly, if $t_0 \in (-1,1)$ denotes 
the unique zero of the transition function $\phi$ then the curve $S_\eps = \{y = t_0 \eps\}$ is 
invariant by the flow of $X_\eps$, for each $\eps > 0$.  Notice that the flow of $X_\eps$ {\em restricted to $S_\eps$}
 is defined by the one-dimensional vector field
$$
\big( x^2 + t_0 \eps \big)  \frac{\partial}{\partial x}.
$$ 
In particular, we have three completely distinct topological behaviors depending on the sign of $t_0$.
\end{Remark}

\section{Acknowledgments} The first  author is partially supported by  FAPESP. The second 
author is partially supported by  CAPES, CNPq, 
FAPESP, FP7-PEOPLE-2012-IRSES 318999, and PHB 2009-0025-PC.

\bibliographystyle{plain}

\bibliography{mybibliography}

\end{document}